\newtheorem{theorem}{Theorem}[section]
\newtheorem*{thmA}{Theorem A}
\newtheorem{lemma}[theorem]{Lemma}
\theoremstyle{remark}
\newtheorem{remark}[theorem]{Remark}
\newtheorem{example}{Example}
\newtheorem*{ack}{Acknowledgments}
\numberwithin{equation}{section}
\begin{document}

\title[Alexandrov-Fenchel type inequalities]{Alexandrov-Fenchel type inequalities with convex weight in space forms}

\author[K.-K. Kwong]{Kwok-Kun Kwong}
\address{University of Wollongong\\
Northfields Ave\\
2522 NSW, Australia}
\email{\href{mailto:kwongk@uow.edu.au}{kwongk@uow.edu.au}}
\author[Y. Wei]{Yong Wei}
\address{School of Mathematical Sciences, University of Science and Technology of China, Hefei 230026, P.R. China}
\email{\href{mailto:yongwei@ustc.edu.cn}{yongwei@ustc.edu.cn}}
\date{\today}
\subjclass[2020]{53C42; 53E10}
\keywords{Weighted Alexandrov-Fenchel inequality, space forms, inverse curvature flow}

\begin{abstract}
In this paper, we derive new sharp weighted Alexandrov-Fenchel and Minkowski inequalities for smooth, closed hypersurfaces under various convexity assumptions in Euclidean, spherical, and hyperbolic spaces. These inequalities extend classical results by incorporating weights given by convex, non-decreasing positive functions, which are otherwise arbitrary. Our approach gives rise to a broad family of geometric inequalities, as each convex, non-decreasing function yields a corresponding inequality, providing considerable flexibility.
\end{abstract}

\maketitle

\tableofcontents

\section{Introduction}

The Alexandrov-Fenchel and Minkowski inequalities are fundamental in differential geometry, geometric analysis, and convex geometry. These inequalities establish fundamental relationships between volume, surface area, and curvature integrals of hypersurfaces enclosing bounded domains. These inequalities have important applications ranging from isoperimetric problems to eigenvalue estimates and geometric flows.
While classical results focus on specific geometric quantities, the study of weighted inequalities offers a more flexible approach by incorporating weights that can address a broader range of problems. In this paper, we extend these classical inequalities by introducing weights that are convex functions of the potential associated with the metric. Each convex weight function gives rise to a corresponding inequality, reflecting geometric structures such as non-uniform densities or radial symmetries. This work introduces a broad family of Alexandrov-Fenchel and Minkowski type inequalities, which are anticipated to have applications in other geometric and analytic problems.

To set the stage for our results, we first give some definitions of the classical quermassintegrals.

Let $\Omega\subset \mathbb{R}^n$ be a smooth bounded domain enclosed by an embedded hypersurface $\Sigma=\partial\Omega$. The quermassintegrals of $\Omega$ can be expressed (up to a constant) as follows:
\begin{align*}
&W_{-1}(\Omega)=|\Omega|,\qquad W_0(\Omega)=|\Sigma|,\\
&W_k(\Omega)=\int_\Sigma \sigma_k(\kappa)d\mu,\quad k=1,\cdots,n-1,
\end{align*}
where $\sigma_k(\kappa)$ the $k$th mean curvature of $\Sigma$, defined as the $k$th elementary symmetric polynomial of the principal curvatures $\kappa=(\kappa_1,\cdots,\kappa_{n-1})$ of $\Sigma$. The classical Alexandrov-Fenchel inequality states that for convex bounded domain $\Omega\subset \mathbb{R}^n$,
\begin{equation}\label{s1.AFRn}
W_k(\Omega)\geq c_{n,k}\left(W_{k-1}(\Omega)\right)^{\frac{n-1-k}{n-k}},\quad \forall~k=0,\cdots,n-1
\end{equation}
for some constant $c_{n,k}$ with equality holds if and only if $\Omega$ is a round ball. These include the isoperimetric inequality and Minkowski inequality
\begin{equation}\label{s1.MinkR}
\int_\Sigma Hd\mu\geq (n-1)\omega_{n-1}^{\frac{1}{n-1}}|\Sigma|^{\frac{n-2}{n-1}}
\end{equation}
as special cases, where $\omega_{n-1}=|\mathbb{S}^{n-1}|$ is the area of the unit sphere. We refer the readers to Schneider's book \cite{Sch14} for a detailed introduction.

Guan-Li \cite{GL09} showed that the inequalities \eqref{s1.AFRn} still hold for star-shaped and $k$-convex domains in $\mathbb{R}^n$ by using the inverse curvature flows.  See \cite{AFM22,CW13,Mc05,Q15} for proofs of \eqref{s1.AFRn} and \eqref{s1.MinkR} using other methods.

The study of weighted integral inequalities extends classical results by incorporating weights, offering greater flexibility to address a broader range of geometric and analytic problems. Our results, presented in the following two subsections, allow the weight function to be any convex, non-decreasing positive function. The flexibility lies in the fact that the weight functions can vary, with each taking the potential associated with the metric as its variable, and each function giving rise to a corresponding inequality that reflects non-uniform densities or radial symmetries.
To illustrate this flexibility, we provide more than a dozen sharp weighted integral inequalities that can be derived in the two-dimensional space forms, as given in Example \ref{examples}.

Weighted inequalities have been the subject of extensive research in recent decades. Notable examples include the isoperimetric problem with Gaussian density \cite{borell1975brunn} and the isoperimetric problem with radial densities \cite{betta2008weighted}. Additionally, weighted inequalities such as the Caffarelli-Kohn-Nirenberg interpolation inequalities \cite{caffarelli1984first} and weighted Sobolev inequalities \cite{cabre2013Sobolev} have been investigated, highlighting the importance of these weighted integrals.
In our previous work \cite{KW23}, a weighted isoperimetric-type inequality was instrumental in resolving the conjecture that the Weinstock inequality for the first Steklov eigenvalue holds for simply connected mean-convex domains in $\mathbb{R}^n$. The weighted Alexandrov-Fenchel and Minkowski inequalities we establish here are expected to have further applications in analysis, convex geometry, isoperimetric inequalities, and eigenvalue estimates.

\subsection{Weighted Alexandrov-Fenchel inequalities in $\mathbb{R}^n$}
In the first part of the paper, we study the Alexandrov-Fenchel type inequalities in $\mathbb{R}^n$ comparing the weighted curvature integrals and the quermassintegrals. This kind of inequality was firstly studied by the first author and Miao \cite{KM14}, where the inequality
\begin{equation}\label{s1.KM1}
\int_\Sigma r^2Hd\mu\geq n(n-1)|\Omega|
\end{equation}
was proved for star-shaped and mean convex hypersurface $\Sigma \subset \mathbb{R}^n$ which encloses a bounded domain $\Omega$. Equality holds if and only if $\Sigma$ is a round sphere centered at the origin. The inequality \eqref{s1.KM1} has been generalized to higher order mean curvature cases in \cite{GR20,KM15,WZ23}. Recently, Wu \cite{WU24} obtained a further generalization with the weight $r^2$ replaced by $r^{2\alpha}$ for any $\alpha\geq 1$. The results are summarized as following
\begin{thmA}[\cite{GR20,KM15,WZ23,WU24}]
Let $\Sigma$ be a smooth, closed, star-shaped, and $k$-convex hypersurface in $\mathbb{R}^n (n\geq 2)$.  Denote by $\Omega$ the domain enclosed by $\Sigma$. Then, for all $1 \le k \le n-1, -1 \le \ell \le k-1$, and $\alpha \ge 1$, the following inequality holds:
$$
\int_{\Sigma} r^{2 \alpha} \sigma_k d \mu \ge \binom{n-1}{k}\omega_{n-1}\left(\frac{W_\ell(\Omega)}{\binom{n-1}{\ell}\omega_{n-1}}\right)^{\frac{n-1-k+2 \alpha}{n-1-l}},
$$
where $\omega_{n-1}=|\mathbb{S}^{n-1}|$ is the area of the unit round sphere. Moreover, equality holds if and only if $\Sigma$ is a round sphere centered at the origin. (Here $\binom{n-1}{-1}:=\frac{1}{n-1}$.)
\end{thmA}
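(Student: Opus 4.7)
The plan is to prove Theorem A via the inverse curvature flow method developed by Guan-Li \cite{GL09} and extended to weighted settings in \cite{GR20,KM14,KM15,WZ23,WU24}. I would run the inverse curvature flow
\[
\frac{\partial X}{\partial t} \;=\; \frac{\sigma_{k-1}(\kappa)}{\sigma_k(\kappa)}\,\nu
\]
starting from the initial hypersurface $\Sigma$. By the classical theorems of Gerhardt and Urbas, if $\Sigma$ is star-shaped and $k$-convex, this flow admits a smooth solution $\Sigma_t$ for all $t\in[0,\infty)$, preserves star-shapedness and $k$-convexity, and the rescaled hypersurfaces $e^{-kt/(n-k)}\Sigma_t$ converge smoothly to a round sphere centered at the origin.

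The core step is to establish monotonicity along the flow of the quotient
\[
\Phi(t) \;:=\; \frac{\int_{\Sigma_t} r^{2\alpha}\sigma_k\,d\mu}{W_\ell(\Omega_t)^{(n-1-k+2\alpha)/(n-1-\ell)}}.
\]
Using the standard evolution equations for quermassintegrals under an inverse curvature flow, combined with the Newton-Maclaurin inequalities $\binom{n-1}{k}^{-1/k}\sigma_k^{1/k}\le\binom{n-1}{k-1}^{-1/(k-1)}\sigma_{k-1}^{1/(k-1)}\le\cdots$, I would show that $W_\ell(\Omega_t)$ is non-decreasing for $-1\le\ell\le k-1$ with a controllable lower bound on its growth rate. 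Differentiating $\int_{\Sigma_t}r^{2\alpha}\sigma_k\,d\mu$ generates two kinds of terms: a ``weight'' contribution involving $r^{2\alpha-2}\langle X,\nu\rangle\sigma_{k-1}$ arising from $\partial_t r^{2\alpha}$, and a ``principal'' contribution from the evolution of $\sigma_k\,d\mu$, which can be handled by exploiting the divergence-free property of the Newton tensors together with integration by parts. The classical Minkowski identity $(n-k)\int_\Sigma\sigma_{k-1}\,d\mu = k\int_\Sigma\sigma_k\langle X,\nu\rangle\,d\mu$ is used repeatedly to convert boundary contributions back into quermassintegrals. Once $\Phi'(t)\le 0$ is proved, the inequality follows from $\Phi(0)\ge \lim_{t\to\infty}\Phi(t)$, with the limit computed explicitly on the limiting round sphere to yield the sharp constant $\binom{n-1}{k}\omega_{n-1}/(\binom{n-1}{\ell}\omega_{n-1})^{(n-1-k+2\alpha)/(n-1-\ell)}$.

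The principal difficulty is the nontrivial weight $r^{2\alpha}$ for $\alpha>1$. In the special case $\alpha=1$, the Minkowski identity applied to the weighted integral directly reduces matters to an inequality among lower-order quermassintegrals, as in \cite{KM15}. For general $\alpha\ge 1$, I expect to invoke the convexity of $u\mapsto u^\alpha$ in the tangent-line form $r^{2\alpha}\ge r_0^{2\alpha}+\alpha r_0^{2\alpha-2}(r^2-r_0^2)$, combined with H\"older's inequality applied to $\int r^{2\alpha}\sigma_k\,d\mu$ against $\int r^2\sigma_k\,d\mu$, to obtain the interpolation needed to close the monotonicity estimate; the hypothesis $\alpha\ge 1$ enters precisely at this step. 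Finally, the equality case should fall out by tracing equalities through the Newton-Maclaurin, Minkowski, and convexity bounds used in Steps 2--4: each forces rotational symmetry about the origin, yielding a round sphere centered at $0$.
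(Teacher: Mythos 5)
Your overall strategy---an inverse curvature flow of Gerhardt--Urbas type, a monotonicity statement, and evaluation of the limit on the round sphere---is the same as the paper's: the paper proves the more general Theorem \ref{s1.thmRn} for an arbitrary non-decreasing convex weight $g(\Phi)$ along the normalized flow \eqref{s4.IMCF}, and Theorem A is the special case $g(x)=(2x)^{\alpha}$; your scale-invariant quotient along the unnormalized flow is equivalent, up to rescaling, to the paper's separate monotonicity of $\int_{\Sigma_t} g(\Phi)\sigma_k\,d\mu_t$ (non-increasing) and of $W_\ell(\Omega_t)$ (non-decreasing). The quermassintegral half of your argument is essentially right, modulo one omission: for $\ell=-1$ the Newton--MacLaurin inequalities only reduce $\frac{d}{dt}W_{-1}$ to $\int_{\Sigma_t}\bigl(\tfrac{n-1}{H}-u\bigr)d\mu_t$, and you still need the Heintze--Karcher inequality to see that this is non-negative.

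The genuine gap is the step you defer with ``I expect to invoke the convexity of $u\mapsto u^{\alpha}$ \dots combined with H\"older's inequality.'' That step is where the theorem lives for $\alpha>1$, and the tools you name do not close it. H\"older applied to the measure $\sigma_k\,d\mu$ gives $\int_\Sigma r^{2\alpha}\sigma_k\,d\mu\ \ge\ \bigl(\int_\Sigma r^{2}\sigma_k\,d\mu\bigr)^{\alpha}\bigl(\int_\Sigma\sigma_k\,d\mu\bigr)^{1-\alpha}$, so descending to the $\alpha=1$ case requires an \emph{upper} bound on $W_k$ in terms of $W_\ell^{(n-1-k)/(n-1-\ell)}$---but Alexandrov--Fenchel gives precisely the reverse inequality, so this reduction cannot produce the stated constant; likewise a tangent-line bound $r^{2\alpha}\ge r_0^{2\alpha}+\alpha r_0^{2\alpha-2}(r^2-r_0^2)$ with a fixed $r_0$ destroys both sharpness and scaling. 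What actually makes the derivative non-positive in the paper is structural rather than interpolatory: one integrates the term $\int g'(\Phi)u\,\sigma_k F$ by parts twice, using $\nabla_i u=h_i^{j}\nabla_j\Phi$, the iteration $T_k^{ij}=\sigma_k g^{ij}-T_{k-1}^{is}h_s^{j}$ and the divergence-free property of the Newton tensors, after which the zeroth- and first-order terms cancel against the $\sigma_{k+1}$ term via Newton--MacLaurin, and all the surviving second-derivative contributions of the weight assemble into
\[
-\int_{\Sigma_t}g''(\Phi)\Bigl(\tbinom{n-1}{k}^{-1}\tfrac{H_{k-1}}{H_k}\,T_{k-1}^{ij}-\tbinom{n-1}{k-1}^{-1}T_{k-2}^{ij}\Bigr)\nabla_i\Phi\nabla_j\Phi\,d\mu_t,
\]
which is non-positive because the matrix in parentheses is, up to the positive factor $H_{k-1}^2/H_k$, the derivative of $H_k/H_{k-1}$ with respect to the Weingarten map, hence positive semi-definite. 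This identity is the missing idea; the hypothesis $\alpha\ge 1$ enters only through $g''\ge 0$ and $g'\ge 0$ for $g(x)=(2x)^{\alpha}$, not through any H\"older interpolation. The equality analysis then follows the paper's: equality forces $\Sigma_t$ totally umbilic for all $t$ (a round sphere) and, since $g$ is strictly increasing, $|\nabla\Phi|\equiv 0$, which centers the sphere at the origin.
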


Our first result is the following Alexandrov-Fenchel type inequalities in $\mathbb{R}^n$ with a general convex weight.

\begin{theorem}\label{s1.thmRn}
Suppose that $\Sigma$ is a smooth closed star-shaped and $k$-convex hypersurface in $\mathbb{R}^n (n\geq 2)$. Denote by $\Omega$ the domain enclosed by $\Sigma$ and $\Phi=r^2/2$. Then, for all $1 \le k \le n-1, -1 \le \ell \le k-1$, and any non-decreasing convex $C^2$ positive function $g$, we have
\begin{equation}\label{s1.eq-thm}
\int_{\Sigma} g(\Phi) \sigma_k d \mu\geq \chi\circ \xi^{-1}\Big(W_\ell(\Omega)\Big),
\end{equation}
where $\chi(r)=\int_{S^{n-1}(r)}g(\Phi)\sigma_k=\binom{n-1}{k}\omega_{n-1}g(\frac{r^2}{2})r^{n-k-1}$ and $\xi(r)=\binom{n-1}{\ell}\omega_{n-1}r^{n-1-\ell}$ are both strictly increasing functions on $\mathbb{R}_+$, and $\xi^{-1}$ denotes the inverse of $\xi(r)$.

If either $g$ is strictly increasing or $g$ is strictly convex, then equality holds in \eqref{s1.eq-thm} if and only if $\Omega$ is a round ball centered at the origin.
\end{theorem}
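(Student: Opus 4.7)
The approach is via the inverse curvature flow method, a strategy that has been successful in establishing Alexandrov-Fenchel type inequalities beginning with \cite{GL09}. The new ingredient here is accommodating a general convex non-decreasing weight $g(\Phi)$ in place of the power weights $r^{2\alpha}$ handled by Theorem A.

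Start from $\Sigma_0 = \Sigma$ and evolve by the inverse curvature flow $\partial_t X = \frac{\sigma_{k-1}(\kappa)}{\sigma_k(\kappa)}\,\nu$ (with a suitable modification when $k=0$). By classical results of Gerhardt-Urbas type and their extensions to star-shaped $k$-convex initial data, the flow exists for all $t\geq 0$, preserves star-shapedness and $k$-convexity, and after rescaling converges smoothly to a round sphere centered at the origin. Set
\[
L(t) := \int_{\Sigma_t} g(\Phi)\sigma_k\,d\mu, \qquad M(t) := W_\ell(\Omega_t), \qquad Q(t) := L(t) - (\chi\circ\xi^{-1})(M(t)).
\]
On a sphere $S^{n-1}(r)$ centered at the origin one has $L=\chi(r)$ and $M=\xi(r)$, so $Q(t)\to 0$ as $t\to\infty$. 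The theorem thus reduces to showing the monotonicity $\frac{dQ}{dt}\leq 0$, which combined with $Q(\infty)=0$ gives $Q(0)\geq 0$, i.e., \eqref{s1.eq-thm}.

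For the monotonicity step, differentiate $L(t)$ and $M(t)$ along the flow using the standard variational formulas for $\int g(\Phi)\sigma_k\,d\mu$ and for $W_\ell$. Minkowski's identity $(n-k)\int_\Sigma\sigma_{k-1}\,d\mu = k\int_\Sigma \sigma_k u\,d\mu$ (with $u=\langle X,\nu\rangle$) converts the $u\sigma_{k-1}$-type expressions arising from the variation of $g(\Phi)$ into lower quermassintegrals, while the Newton-MacLaurin inequalities for the elementary symmetric functions control the curvature quotients that appear on $k$-convex hypersurfaces. The convexity and monotonicity of $g$ enter through a pointwise Jensen-type bound $g(\Phi) - g(\bar\Phi) \geq g'(\bar\Phi)(\Phi - \bar\Phi)$, applied at the reference level $\bar\Phi := \tfrac12(\xi^{-1}(M(t)))^2$, whose integrated form supplies the missing cancellation.

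The main obstacle is orchestrating these three ingredients---Minkowski's identity, the Newton-MacLaurin inequalities, and the convexity of $g$---so that their equality cases all coincide (holding precisely on centered round spheres) and the signs conspire to give $\frac{dQ}{dt}\leq 0$. The same analysis supplies rigidity: when $g$ is strictly increasing or strictly convex, the Jensen step becomes strict unless $\Phi$ is constant on $\Sigma$ and all principal curvatures are equal, forcing $\Omega$ to be a centered round ball.
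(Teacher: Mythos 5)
Your overall skeleton (inverse curvature flow, monotonicity, evaluation of the limit on a round sphere) matches the paper's strategy, but two of your key steps have genuine gaps. First, you run the \emph{unnormalized} flow $\partial_t X=\frac{\sigma_{k-1}}{\sigma_k}\nu$ and assert $Q(t)\to 0$ from convergence to a round sphere \emph{after rescaling}. That does not follow: $Q$ is not scale-invariant, both $L(t)$ and $(\chi\circ\xi^{-1})(M(t))$ diverge, and knowing each is asymptotic to the corresponding spherical value only gives $Q(t)=o(\chi(r(t)))$, not $Q(t)\to 0$; this is especially delicate when $g$ grows rapidly, e.g.\ $g=e^{\Phi}$. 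The paper avoids this entirely by using the normalized flow $\partial_t X=\bigl(\frac{H_{k-1}}{H_k}-u\bigr)\nu$, which converges to an honest round sphere $S^{n-1}(r)$; one then proves the \emph{separate} monotonicities $\frac{d}{dt}\int_{\Sigma_t}g(\Phi)\sigma_k\,d\mu_t\le 0$ and $\frac{d}{dt}W_\ell(\Omega_t)\ge 0$ (the latter requiring the Heintze--Karcher inequality in the case $\ell=-1$, which you do not address), and reads off both limits directly.

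Second, and more seriously, your proposed mechanism for exploiting the convexity of $g$ --- a tangent-line bound $g(\Phi)-g(\bar\Phi)\ge g'(\bar\Phi)(\Phi-\bar\Phi)$ at a reference level $\bar\Phi$ tied to $M(t)$ --- is not how convexity enters and would not produce the needed sign. Along the normalized flow, $\frac{d}{dt}\int g(\Phi)\sigma_k\,d\mu$ generates the terms $g'(\Phi)\nabla_i\bigl(T_{k-1}^{ij}\nabla_j\Phi\bigr)$ and $g''(\Phi)T_{k-1}^{ij}\nabla_i\Phi\nabla_j\Phi$; after two integrations by parts using that the Newton tensors are divergence free, together with $\nabla_iu=h_i^j\nabla_j\Phi$, the identity $\nabla_i\bigl(T_{k-1}^{ij}\nabla_j\Phi\bigr)=k\binom{n-1}{k}(H_{k-1}-uH_k)$ and Newton--MacLaurin, everything collapses to a single quadratic form in $\nabla\Phi$ whose coefficient is $g''(\Phi)$ times (up to a positive factor) the derivative $\frac{\partial}{\partial h}\bigl(\frac{H_k}{H_{k-1}}\bigr)$, which has a sign on $\Gamma_k^+$ because $H_k/H_{k-1}$ is monotone in the Weingarten map. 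That algebraic identification is the crux of the proof and is absent from your outline; without it the $g''$-terms of both signs do not visibly cancel, and no pointwise convexity comparison against a flow-dependent constant $\bar\Phi$ can substitute for it --- indeed the monotonicity of $L(t)$ holds independently of $M(t)$. The rigidity discussion would then follow as you describe, but only once this computation is in place.
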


This result can be restated in a more qualitative form as an isoperimetric-type statement:
\begin{theorem}
Among the class of star-shaped and $k$-convex domains in $\mathbb{R}^n$ with fixed quermassintegral $W_\ell$, where $g$ is a convex and non-decreasing function, the minimum of the weighted curvature integral $\int_{\Sigma} g(\Phi) \sigma_k \, d \mu \, (k > \ell)$ is achieved by the round spheres centered at the origin.
\end{theorem}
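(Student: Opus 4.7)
The plan is to treat this statement as a direct qualitative consequence of Theorem \ref{s1.thmRn}, which has already been stated. Fix a prescribed value $V$ for the quermassintegral $W_\ell$ and consider any admissible domain $\Omega$ (star-shaped, $k$-convex) with $W_\ell(\Omega)=V$. Applying Theorem \ref{s1.thmRn} gives the uniform lower bound
\[
\int_{\Sigma} g(\Phi)\,\sigma_k\,d\mu \;\ge\; \chi\circ\xi^{-1}(V),
\]
which depends only on $V$, $g$, $k$, $\ell$, and the dimension. Since the right-hand side is the same for every competitor with $W_\ell = V$, it is enough to exhibit one competitor achieving it.

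Next I would verify attainment on a centered round ball. Because $\xi(r) = \binom{n-1}{\ell}\omega_{n-1} r^{n-1-\ell}$ is strictly increasing on $\mathbb{R}_+$, there is a unique $r_\star>0$ with $\xi(r_\star)=V$. The definitions of $\xi$ and $\chi$ in Theorem \ref{s1.thmRn} are, by construction, the values of $W_\ell$ and of $\int_{\Sigma} g(\Phi)\sigma_k\,d\mu$ evaluated on the round sphere $S^{n-1}(r)$ centered at the origin; this is immediate from $\kappa_i=1/r$, $\Phi=r^2/2$, and the usual formula for the measure of $S^{n-1}(r)$. Consequently the centered ball $B_{r_\star}$ satisfies both $W_\ell(B_{r_\star})=V$ and
\[
\int_{\partial B_{r_\star}} g(\Phi)\,\sigma_k\,d\mu \;=\; \chi(r_\star) \;=\; \chi\circ\xi^{-1}(V),
\]
so it saturates the bound and is therefore a minimizer in the constrained class.

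Finally, I would invoke the equality clause of Theorem \ref{s1.thmRn}: if $g$ is strictly increasing or strictly convex, any admissible $\Omega$ attaining the minimum must be a round ball centered at the origin, and uniqueness of $r_\star$ then pins it down as $B_{r_\star}$. In the remaining degenerate case (where $g$ is only weakly increasing and convex, e.g.\ a positive constant), the minimum is still realized by $B_{r_\star}$ but not necessarily uniquely; the statement only claims existence of a minimizer among centered balls, which still holds. I do not expect any real obstacle here — the work has already been done in Theorem \ref{s1.thmRn}; the only things to be careful about are (i) the monotonicity of $\xi$ so that $\xi^{-1}(V)$ is well-defined, and (ii) that the normalizations in the definitions of $W_\ell$, $\xi$, and $\chi$ are mutually consistent on spheres, both of which are direct computations.
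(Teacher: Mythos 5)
Your proposal is correct and matches the paper's intent exactly: the paper presents this theorem as a qualitative restatement of Theorem \ref{s1.thmRn} with no separate proof, and your argument (the uniform lower bound $\chi\circ\xi^{-1}(V)$ depending only on the constrained value, attainment on the centered sphere of radius $\xi^{-1}(V)$ by the very definitions of $\chi$ and $\xi$, and the equality clause for uniqueness) is precisely the intended derivation. No gaps.
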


For the proof of \eqref{s1.eq-thm}, we apply the normalized inverse curvature flow studied by Gerhardt \cite{Ge90} and Urbas \cite{Ur90} for star-shaped and $k$-convex hypersurfaces in $\mathbb{R}^n$. As a  key step, we show that the weighted curvature integral $\int_{\Sigma} g(\Phi) \sigma_k d \mu$ is monotone non-increasing and the quermassintegral  $W_\ell$ is non-decreasing for $\ell<k$ along the flow.

\subsection{Weighted Minkowski inequality in space forms}

The Minkwoski inequality \eqref{s1.MinkR} is a special case of the Alexandrov inequalities \eqref{s1.AFRn}. Recently, due to the close relationship with the Penrose type inequalities in asymptotically hyperbolic manifolds, the weighted Minkowski type inequality has attracted much attention. In particular, for star-shaped and mean convex hypersurface $\Sigma$ in the hyperbolic space $\mathbb{H}^n$ which encloses a bounded domain $\Omega$, Brendle-Hung-Wang \cite{BHW16}  proved
\begin{equation}\label{s1.BHW}
\int_\Sigma \phi'(r)Hd\mu-n(n-1)\int_\Omega \phi'(r)dv\geq (n-1)\omega_{n-1}^{\frac{1}{n-1}}|\Sigma|^{\frac{n-2}{n-1}},
\end{equation}
where $\phi(r)=\sinh r$. See \cite{DG16} for a related inequality. The Minkowski inequality for a star-shaped and mean convex domain in $\mathbb{H}^n$
\begin{equation}\label{s1.MinkHn}
\int_\Sigma Hd\mu-(n-1)|\Omega|\geq \chi\circ\xi^{-1}\left(|\Sigma|\right)
\end{equation}
was also proved by Brendle-Guan-Li \cite{BGL18} (see also \cite[Theorem 4.2]{GL21}), where $\chi(r)=(n-1)\omega_{n-1}\left(\coth r\sinh^{n-1}r-\int_0^r\sinh^{n-1} sds\right)$ and $\xi(r)=\omega_{n-1}\sinh^{n-1}r$ are both strictly increasing function and $\xi^{-1}$ denotes the inverse of $\xi$. The left hand side of \eqref{s1.MinkHn} in fact represents the first quermassintegral $W_1(\Omega)$ of a bounded domain in $\mathbb{H}^n$. Another version of Minkowski type inequality was established recently by the authors \cite{KW23} for $h$-convex domains in $\mathbb{H}^n$:
\begin{equation}\label{s1.MinkKW}
\int_\Sigma \Phi(r) Hd\mu+(n-1)|\Omega|\geq \chi\circ\xi^{-1}\left(|\Sigma|\right),
\end{equation}
where $\Phi(r)=\int_0^r\phi(s)ds=\cosh r-1$ and $$\chi(r)=(n-1)\omega_{n-1}\left((\cosh r-1)\coth r\sinh^{n-1}r+\int_0^r\sinh^{n-1} sds\right).$$
Note that the left hand side of \eqref{s1.MinkKW} is equivalent to $\int_\Sigma\phi'(r)Hd\mu-W_1(\Omega)$. Here by $h$-convex we mean that all principal curvatures of $\Sigma$ satisfy $\kappa_i\geq 1, i=1,\cdots,n-1$ everywhere on $\Sigma$.

In the second part of this paper, we will discuss various generalizations of the above inequalities in space forms. Before stating our results, we recall some definitions. A space form $M^n(K), n\geq 2,$ is a real simply connected $n$-dimensional Riemannian manifold with constant sectional curvature equal to $K \in\{-1, 0, 1\}$. We view $M^n(K)$ as a warped product manifold $I \times \mathbb{S}^{n-1}$ equipped with the metric
$$
\bar{g}=d r^2+\phi(r)^2 g_{\mathbb{S}^{n-1}}.
$$
There are three cases: $M^n(K)$ is the Euclidean space if $K=0, I=[0,\infty)$ and $\phi(r)=r$; $M^n(K)$ is the hyperbolic space $\mathbb{H}^n$ if $K=-1, I=[0,\infty)$ and $\phi(r)=\sinh r$; and $M^n(K)$ is the sphere $\mathbb{S}^n$ if $K=1, I=[0,\pi)$ and $\phi(r)=\sin r$.
Define $\Phi(r)$ by
$$
\Phi(r)=\int_0^r \phi(s) d s=
\begin{cases}
1-\cos r, & K=1\\
\dfrac{r^2}{2}, & K=0\\
\cosh r-1, & K=-1
\end{cases}
$$
It's well known that $V=\bar{\nabla}\Phi=\phi(r)\partial_r$ is a conformal Killing field on $M^n(K)$ satisfying $\bar{\nabla}V=\phi'(r)\bar{g}$.

Let $g:[0,\infty)\to \mathbb{R}$ be a non-decreasing convex $C^2$ function. For a bounded domain $\Omega$ in a space form $M^n(K)$, we define the weighted volume of $\Omega$
\begin{equation}\label{s3.Ag}
A_g(\Omega)=\int_\Omega \left(g'(\Phi)\phi'+Kg(\Phi) \right)dv.
\end{equation}
When $g(x)=x$, we have $g'(\Phi)\phi'+Kg(\Phi)=\phi'+K\Phi=1$. Then $A_g(\Omega)=\text{vol}(\Omega)$ is just the volume of the domain $\Omega$. When $g(x)\equiv 1$, then $A_g(\Omega)=K\text{vol}(\Omega)$ is also the volume of $\Omega$ up to a constant $K$.

We first prove an inequality for convex curves in the two dimensional case.
\begin{theorem}\label{s1.thmn=2}
For a smooth, closed, strictly convex curve $\gamma = \partial \Omega$ on $M^2(K)$, with $\gamma$ enclosing the origin, and a non-decreasing convex $C^2$ function $g$, we have
\begin{align}\label{s1.ineqn=2}
\int_\gamma g(\Phi) \kappa d s+A_g(\Omega) \ge 2 g\left(\frac{\mathrm{A}(L)}{2 \pi}\right) \sqrt{4 \pi^2-K L^2}-2 \pi g(0)+4 \pi K G\left(\frac{\mathrm{A}(L)}{2 \pi}\right).
\end{align}
Here $\kappa, L$ are the curvature and length of the curve $\gamma$ respectively, $\mathrm{A}(l):=$ the area of the (convex) geodesic ball in $M^2(K)$ with circumference $l$ and $G(r):=\int_{0}^{r}g(t)dt$.

If either $g$ is strictly increasing or $g$ is strictly convex, then the equality holds if and only if $\gamma$ is a geodesic circle centered at the origin.
\end{theorem}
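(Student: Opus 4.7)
The plan is to apply the inverse curvature flow (ICF) $\partial_t X = \kappa^{-1}\nu$ starting from $\gamma_0 = \gamma$, in parallel with the approach used for Theorem~\ref{s1.thmRn}. For a strictly convex initial curve in $M^2(K)$, classical results of Gerhardt, Urbas, and Makowski--Scheuer guarantee that strict convexity is preserved under the flow and that the rescaled curves converge to a round geodesic circle. Along the flow, $\partial_t(ds) = ds$, so $L(t) = L_0 e^t$, while Gauss--Bonnet gives $\int_{\gamma_t}\kappa\,ds = 2\pi - K|\Omega_t|$.

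Set $E(t) := \int_{\gamma_t} g(\Phi)\kappa\,ds + A_g(\Omega_t)$, and write $F(L)$ for the right-hand side of \eqref{s1.ineqn=2} viewed as a function of length. The first task is to compute $\frac{dE}{dt}$. Using the standard evolution formulas under ICF together with the conformal Killing identity $\bar\nabla V = \phi'\bar g$ (which yields the tangential formula $\partial_s\langle V,T\rangle = \phi' - \kappa\langle V,\nu\rangle$), two integrations by parts on the closed curve produce, after substantial cancellation, the concise identity
\[
\frac{dE}{dt} \;=\; 2\int_{\gamma_t} g'(\Phi)\,\langle V,\nu\rangle\,ds \;-\; \int_{\gamma_t} g''(\Phi)\,\frac{\langle V,T\rangle^2}{\kappa}\,ds.
\]
On the other hand, parametrizing the comparison circle via $L = 2\pi\phi(R_L)$ and $A(L)/(2\pi) = \Phi(R_L)$, and using the identity $\phi' = 1 - K\Phi$, a direct differentiation produces the pleasant simplification $F'(L)\cdot L = 4\pi\,g'(\Phi(R_L))\,\phi(R_L)^2$.

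The heart of the proof is the monotonicity $\frac{dE}{dt} \le F'(L)\,L$. Applying the divergence identity $\int_\gamma g'(\Phi)\langle V,\nu\rangle\,ds = \int_\Omega [g''(\Phi)\phi^2 + 2g'(\Phi)\phi']\,dv$ in $M^2(K)$ reduces this to a sharp weighted isoperimetric-type estimate. When $g$ is linear ($g''\equiv 0$) it collapses to the classical isoperimetric inequality $L^2 \ge 4\pi|\Omega|$ (and its space-form analogue), and for strictly convex $g$ the extra non-negative term $\int g''(\Phi)\langle V,T\rangle^2/\kappa\,ds$ absorbs the remaining slack. Once this monotonicity is in hand, $E(t) - F(L(t))$ is non-increasing along the flow. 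As $t \to T^*$, the rescaled curves become round, but the limit geodesic circle need not be centered at the origin; for any geodesic circle---centered or not---the inequality $E \ge F(L)$ is verified directly by combining Jensen's inequality applied to the convex function $g$ (bounding the term $\int_\gamma g(\Phi)\kappa\,ds$) with the rearrangement inequality applied to the radially non-decreasing integrand $g'(\Phi)\phi' + Kg(\Phi)$ (bounding the term $A_g(\Omega)$). Hence $E(0) - F(L(0)) \ge \lim_{t\to T^*}\bigl(E(t) - F(L(t))\bigr) \ge 0$, which is \eqref{s1.ineqn=2}. The equality case is traced back through the rearrangement/Jensen steps and the term $\int g''(\Phi)\langle V,T\rangle^2/\kappa\,ds$; strict convexity or strict monotonicity of $g$ then forces $\gamma_0$ to be a geodesic circle centered at the origin.

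The principal obstacle is the verification of $\frac{dE}{dt} \le F'(L)L$ uniformly in $K \in \{-1,0,1\}$ and for arbitrary convex non-decreasing $g$: after the divergence identity the inequality becomes a weighted isoperimetric estimate that must interpolate between the linear regime (where it is the classical isoperimetric inequality) and the strictly convex regime (where the boundary correction $\int g''\langle V,T\rangle^2/\kappa\,ds$ enters), and the precise bookkeeping of bulk and boundary terms is delicate. A secondary subtlety is that the asymptotic circle under the unnormalized ICF is not in general centered at the origin, which is why the non-centered base case has to be handled by the Jensen-plus-rearrangement argument above rather than by evaluation at a centered limit.
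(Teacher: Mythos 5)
There is a genuine gap, and it is the central step: the claimed monotonicity $\frac{dE}{dt}\le F'(L)\,L$ along the unnormalized flow $\partial_tX=\kappa^{-1}\nu$ is false. Your evolution identity $\frac{dE}{dt}=2\int_{\gamma_t}g'(\Phi)u\,ds-\int_{\gamma_t}g''(\Phi)\langle V,T\rangle^2\kappa^{-1}ds$ and the computation $F'(L)L=4\pi g'(\Phi(R_L))\phi(R_L)^2$ are both correct, but the resulting inequality fails for off-center configurations. Concretely, in $\mathbb{H}^2$ with $g(x)=x$ (so $g''=0$, $A_g(\Omega)=|\Omega|$), a geodesic circle of radius $\rho$ centered at a point $p$ evolves under ICF through concentric circles centered at $p$, and the divergence theorem plus the hyperbolic cosine rule give $\frac{dE}{dt}=4\int_{\Omega_t}\cosh r\,dv=4\pi\cosh d(0,p)\,\sinh^2\rho$, while $F'(L)L=L^2/\pi=4\pi\sinh^2\rho$. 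Hence $\frac{d}{dt}\bigl(E-F(L)\bigr)=4\pi\sinh^2\rho\,\bigl(\cosh d(0,p)-1\bigr)>0$ whenever $p\neq 0$: the deficit \emph{increases}. (Your reduction in the linear regime is not the classical isoperimetric inequality but the estimate $\int_\Omega\phi'\,dv\le L^2/(4\pi)$, which in $\mathbb{H}^2$ is exactly the $n=2$ Gir\~ao--Pinheiro-type inequality that \cite{KWWW22} disproves.) The same failure occurs in $\mathbb{R}^2$ for strictly convex $g$, e.g.\ $g(x)=x^2$ and a unit circle centered far from the origin, where $\frac{dE}{dt}\sim 6\pi|p|^2$ while $F'(L)L=4\pi g'(1/2)$ is bounded; the boundary term $\int g''\langle V,T\rangle^2\kappa^{-1}ds$ absorbs only about half of the bulk $g''$-term and none of the $g'$-discrepancy. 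The underlying reason is structural: the weight $g(\Phi)$ is anchored at the origin, but the unnormalized ICF does not recenter the curve, so there is no mechanism forcing the origin-dependent deficit to decrease. (Secondary issues: in $\mathbb{S}^2$ the unnormalized ICF terminates in finite time at a great circle with $\kappa\to0$, and in $\mathbb{H}^2$ both $E$ and $F(L)$ diverge as $t\to\infty$, so even granting monotonicity the limiting comparison would need care beyond ``the rescaled curves become round.'')

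The paper's proof avoids all of this by using the locally constrained flow \eqref{flow} with speed $F=\phi'/\kappa-u$. That flow is length-preserving, so $F(L(t))$ is constant and one only needs $E$ itself to be non-increasing --- which follows from \eqref{s3.evln=2} after integrating by parts with $\nabla u=\kappa\nabla\Phi$, every resulting term having a sign --- and it converges to the geodesic circle \emph{centered at the origin}, where $E$ equals the right-hand side of \eqref{s1.ineqn=2} exactly. If you wish to salvage your route, you would have to replace the unnormalized ICF by such a constrained flow (or otherwise prove a genuinely new weighted isoperimetric estimate that, as shown above, is false as stated). Your ``base case'' observation --- that for a non-centered geodesic circle the inequality can be checked directly via the mean of $\Phi$ on circles and the bathtub principle applied to the radially non-decreasing integrand $g'(\Phi)\phi'+Kg(\Phi)$ --- is correct and a nice consistency check, but it cannot compensate for the failed monotonicity.
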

\begin{example}\label{examples}
For illustration, we present examples of sharp inequalities involving three geometric quantities that can be derived for strictly convex curves in $\mathbb{R}^2$ by applying various choices of the function $g$ in \eqref{s1.ineqn=2}:
\begin{enumerate}
\item $\int _\gamma \frac{r^2}{2} \kappa {ds}+|\Omega|\ge \frac{L^2}{2 \pi}$.
\item $\int _\gamma \frac{r^3}{3} \kappa {ds}+\int _\Omega { r} dv \ge \frac{L^3}{6 \pi^2}$.
\item $\int _\gamma \frac{r^4}{4} \kappa {ds}+\int _\Omega r^2dv \ge \frac{L^4}{16 \pi^3}$.
\item $\int_ \gamma e^{\frac{r^2}{2}} \kappa d s+\int_\Omega e^{\frac{r^2}{2}} d v\ge 2 \pi\left(2 e^{\frac{L^2}{8 \pi^2}}-1\right)$.
\item
$\int_ \gamma e^{\frac{r^4}{4}} \kappa d s+\int_\Omega e^{\frac{r^2}{2}} d v\ge2 \pi\left(2 e^{\frac{L^4}{64 \pi^4}}-1\right)$.
\item
$\int _\gamma \sinh \left(\frac{r^2}{2}\right) \kappa  {ds}+\int_\Omega \cosh \left(\frac{r^2}{2}\right) d v\ge 4 \pi \sinh \left(\frac{L^2}{8 \pi^2}\right)$.
\item
$\int _\gamma \cosh \left(\frac{r^2}{2}\right) \kappa  {ds}+\int_\Omega \sinh \left(\frac{r^2}{2}\right) d v\ge
4\pi \cosh \left(\frac{L^2}{8 \pi^2}\right)-2\pi$.
\end{enumerate}
If $\gamma$ is a strictly convex curve enclosing the center of an open geodesic ball of radius $\frac{\pi}{2}$ in $\mathbb{S}^2$, then the following sharp inequalities hold:
\begin{enumerate}
\item
$\int_\gamma (1-\cos r)\kappa ds +|\Omega|\ge \frac{L^2}{2 \pi}$.
\item
$\int _\gamma (1-\cos r)^2 \kappa d s+\int_\Omega \sin ^2 r d v\ge \frac{4}{3}\left(\sqrt{4 \pi^2-L^2}-2 \pi\right)-\frac{L^2}{3 \pi^2}\left(\sqrt{4 \pi^2-L^2}-3 \pi\right)$.
\item
$\int _\gamma (1-\cos r)^3 \kappa d s+\int_\Omega
(1-\cos r)^2(2 \cos r +1) dv\ge
-\frac{3 L^4}{16 \pi^3}-\frac{L^2}{\pi^2}\left(\sqrt{4 \pi^2-L^2}-3 \pi\right)+4 \sqrt{4 \pi^2-L^2}-8 \pi$
\item
$\int_\gamma\left(\sec r -1\right)\kappa ds+\int_\Omega (2\sec r -1) dv\ge 4 \pi \log \left(\frac{2 \pi}{\sqrt{4 \pi^2-L^2}}\right)$.
\item
$\int_\gamma\left(\cos r +\sec r -2\right)\kappa ds+\int_\Omega (2\sec r -2) dv\ge4 \pi \log \left(\frac{2 \pi}{\sqrt{4 \pi^2-L^2}}\right)-\frac{L^2}{2 \pi}$.
\end{enumerate}
These inequalities are derived by substituting $g(x)$ with $x, x^2, x^3$,  $\frac{x}{1-x}$ and $\frac{x^2}{1-x}$, respectively. Note that the last two functions are convex over the range of $\Phi$.

Similarly, in $\mathbb H^2$, we have the following sharp inequalities for strictly convex curves:
\begin{enumerate}
\item
$\int_\gamma (\cosh r -1)\kappa ds +|\Omega|\ge \frac{L^2}{2 \pi}$.
\item
$\int _\gamma (\cosh r -1)^2 \kappa d s+\int_\Omega \sinh ^2 r d v\ge \frac{L^2}{3\pi^2}\left(\sqrt{L^2+4 \pi^2}-3 \pi\right) +\frac{4 }{3}\left(\sqrt{L^2+4 \pi^2}-2 \pi\right) $.
\item
$\int _\gamma (\cosh r -1)^3 \kappa d s+\int_\Omega
(\cosh r -1)^2(2 \cosh r +1) dv\ge \frac{3 L^4}{16 \pi^3}-\frac{L^2}{\pi^2}\left(\sqrt{L^2+4 \pi^2}-3 \pi\right)-4 \sqrt{L^2+4 \pi^2}+8 \pi $.
\item
$\int_\gamma\left(\cosh r + \textrm{sech } r -2\right)\kappa ds+\int_\Omega (2-2 \textrm{sech } r ) dv\ge \frac{L^2}{2 \pi}-2 \pi \log \left(\frac{L^2}{4 \pi^2}+1\right)$.
\end{enumerate}
These inequalities are obtained by substituting $g(x)$ as $x, x^2, x^3$, and $\frac{x^2}{1+x}$, respectively.
\end{example}

When $g = 1$, the inequality in Theorem \ref{s1.thmn=2} reduces to an equality, corresponding to the classical Gauss-Bonnet formula. The first inequality in the three lists above was proved by the authors and G. Wheeler and V.-M.Wheeler in \cite[Theorem 1.2]{KWWW22} for the specific case where $g(x) = x$, and was applied in \cite[Corollary 5.8]{KWWW22}  to provide a counterexample to the $n = 2$ case of a conjecture by Girão and Pinheiro \cite{GP17}.

To prove Theorem \ref{s1.thmn=2}, we employ the inverse curvature type flow $X: \mathbb{S}^1 \times[0, T) \rightarrow M^2(K)$
\begin{equation}\label{flow}
\begin{cases}
\frac{\partial}{\partial t} X(x, t)=\left(\dfrac{\phi^{\prime}(r)}{\kappa}-u\right) \nu(x, t) \\
X(\cdot, 0)=X_0,
\end{cases}
\end{equation}
where $\kappa$ is the curvature, $\nu$ is the unit outward normal, and $u=\langle V, \nu\rangle$ is the support function of the curve. The flow \eqref{flow} for strictly convex closed curves in space forms has been studied in \cite{KWWW22}, and it turns out to have a number of applications, including a unified proof of the classical isoperimetric inequality in $2$-dimensional space forms. For initially smooth strictly convex closed curve $\gamma_0$, the flow \eqref{flow} preserves the convexity, exists for all time and converges to a geodesic circle as $t\to\infty$. A nice feature of this flow is that it preserves the length of the evolving curves. A key in proving Theorem \ref{s1.thmn=2} is to show that the left hand side of \eqref{s1.ineqn=2} is monotone decreasing along the flow \eqref{flow}.

For the higher dimensional case, we prove the following Minkowski-type inequalities for $h$-convex hypersurfaces in the hyperbolic space $\mathbb{H}^n$ and for convex hypersurfaces in the sphere $\mathbb{S}^n$.
\begin{theorem}\label{s1.thmH}
Suppose that $\Sigma$ is a smooth closed $h$-convex hypersurface in $\mathbb{H}^n (n\geq 3)$ enclosing a bounded domain $\Omega$ that contains the origin. For any non-decreasing convex $C^2$ positive function $g$, we have
\begin{equation}\label{s1.eq-thmH}
\int_{\Sigma} g(\Phi) H d \mu+(n-1)A_g(\Omega)\geq \chi\circ \xi^{-1}\big(|\Sigma|\big),
\end{equation}
where $\xi(r)=\omega_{n-1}\phi^{n-1}(r)$, and $\chi(r)=\int_{S(r)}g(\Phi)H+(n-1)A_g(B(r))$.  If either $g$ is strictly increasing or $g$ is strictly convex, then equality holds in \eqref{s1.eq-thmH} if and only if $\Omega$ is a geodesic ball centered at the origin.
\end{theorem}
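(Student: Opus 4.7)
The plan is to extend the two-dimensional argument of Theorem \ref{s1.thmn=2} and the higher-dimensional strategy of Brendle--Guan--Li \cite{BGL18} and the authors \cite{KW23} so as to accommodate the general convex weight $g$. Since $\chi\circ\xi^{-1}(|\Sigma|)$ depends on $\Sigma$ only through its area, it suffices to construct a smooth flow $\Sigma_t$ with $\Sigma_0=\Sigma$ such that (i) $|\Sigma_t|$ is preserved, (ii) $\Sigma_t$ stays $h$-convex and converges smoothly to a geodesic sphere centered at the origin as $t\to\infty$, and (iii) the functional
\[
Q(t):=\int_{\Sigma_t}g(\Phi)H\,d\mu+(n-1)A_g(\Omega_t)
\]
is monotone non-increasing. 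Passing to the limit and using that $Q$ equals $\chi\circ\xi^{-1}(|\Sigma_t|)$ on centered geodesic spheres will then give \eqref{s1.eq-thmH}.

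The flow I propose is the natural higher-dimensional analogue of \eqref{flow},
\[
\partial_tX=\Bigl(\frac{(n-1)\phi'(r)}{H}-u\Bigr)\nu,
\]
which is the flow used by Brendle--Guan--Li \cite{BGL18} to establish \eqref{s1.MinkHn}. Area preservation is an immediate consequence of the Minkowski identity $\int_\Sigma Hu\,d\mu=(n-1)\int_\Sigma\phi'\,d\mu$ valid in every space form. Preservation of $h$-convexity, long-time existence, and exponential smooth convergence to a geodesic sphere centered at the origin follow from standard maximum-principle arguments applied to the support function $u$ and to the tensor $h_i^{\,j}-\delta_i^{\,j}$, adapting the techniques of \cite{BGL18} and \cite{KW23} to our setting; the fact that $\Omega_0$ contains the origin is what forces the limiting sphere to be centered at the origin.

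The decisive step is the monotonicity $dQ/dt\le 0$. Setting $F=(n-1)\phi'/H-u$, and using $\partial_t\Phi=Fu$, $\partial_td\mu=FH\,d\mu$, $\partial_tH=-\Delta_\Sigma F-(|A|^2-(n-1))F$, together with the space-form identities $\bar\nabla^2\Phi=\phi'\bar g$, $\Delta_\Sigma\Phi=(n-1)\phi'-Hu$, and $|\nabla_\Sigma\Phi|^2=\phi^2-u^2$, followed by one integration by parts, a direct computation shows that the role of the correction $(n-1)A_g(\Omega_t)$ is precisely to cancel both the undifferentiated-$g$ contribution from the ambient Ricci curvature and the $(n-1)g'(\Phi)\phi'$ piece coming from $\Delta_\Sigma g(\Phi)$, leaving the compact expression
\[
\frac{dQ}{dt}=\int_{\Sigma}F\Bigl(2g'(\Phi)Hu-g''(\Phi)(\phi^2-u^2)+g(\Phi)(H^2-|A|^2)\Bigr)d\mu.
\]
When $g\equiv\mathrm{const}$, only the last term survives, and the monotonicity reduces, via $H^2-|A|^2=2\sigma_2$, the Newton--Maclaurin inequality, and the Minkowski identity, to the one used in \cite{BGL18}.

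The main obstacle is that the $g'$ and $g''$ contributions do not have a pointwise sign, and in particular $-Fg''(\Phi)(\phi^2-u^2)$ has the a priori wrong sign once one knows $F\le 0$ (a pointwise inequality $uH\ge (n-1)\phi'$ fails in general for $h$-convex surfaces, as one can check directly on non-centered geodesic spheres). Exploiting the Minkowski identity $\int_\Sigma FH\,d\mu=0$, I plan to rebalance the integrand by adding and subtracting multiples of $FH$ before invoking the Newton--Maclaurin inequality and $g''\ge 0$; alternatively, following the two-dimensional proof, I would integrate the $g''$-term by parts once more using $\nabla_\Sigma\phi'=\nabla_\Sigma\Phi$ and $\nabla_\Sigma u=-h(\nabla_\Sigma\Phi,\cdot)$, transferring the derivative onto $F$ so that the $h$-convexity of $\Sigma$ and the convexity of $g$ together force the correct sign of the resulting quadratic form in $\nabla_\Sigma\Phi$. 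Once $dQ/dt\le 0$ is established, passing to the limit yields \eqref{s1.eq-thmH}. For the equality case, under either the strict monotonicity or the strict convexity hypothesis on $g$, equality in the monotonicity forces saturation of the corresponding pointwise inequality along the entire flow, which combined with $h$-convexity and the fact that $\Omega$ contains the origin implies that $\Sigma$ is a geodesic sphere centered at the origin.
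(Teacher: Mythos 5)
Your setup coincides with the paper's: you use the same Brendle--Guan--Li flow (your speed is $(n-1)$ times that of \eqref{s4.Flowk=1}, a harmless time reparametrization), the same functional $Q$, area preservation via the Minkowski identity, convergence from \cite{HLW22}, and your displayed formula for $dQ/dt$ agrees with \eqref{s3.evl1} once one writes $2\sigma_2=H^2-|A|^2$ and $|\nabla\Phi|^2=\phi^2-u^2$. The problem is that the one step carrying all of the difficulty --- the sign of $dQ/dt$ --- is left as two tentative strategies, and neither, as stated, closes the gap. The rebalancing via $\int_{\Sigma}FH\,d\mu=0$ plays no role in the actual argument and in particular does not address the term $2\int g(\Phi)\sigma_2 F\,d\mu$, which for non-constant $g$ cannot be disposed of by the Minkowski identity. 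Your alternative, ``transferring the derivative onto $F$,'' would generate uncontrollable $\nabla H$ terms; what works is the opposite direction: write $2g'(\Phi)uHF=\tfrac{2}{n-1}g'(\Phi)u\,\Delta\Phi$, integrate by parts using $\nabla_iu=h_i^j\nabla_j\Phi$ (note your sign for $\nabla u$ is opposite to \eqref{s2.du}, and the sign is exactly what is at stake here), split $h_s^ig^{sj}=\sigma_1g^{ij}-T_1^{ij}$, and integrate the $T_1$ piece by parts once more using that $T_1$ is divergence free together with \eqref{s2.dT1}. The resulting term $-\tfrac{1}{n-1}\int g\,\nabla_i\bigl(T_1^{ij}\nabla_j\Phi\bigr)$ is precisely what absorbs the bound
\begin{equation*}
2\int_{\Sigma_t} g(\Phi)\sigma_2\Bigl(\frac{\phi'}{H}-\frac{u}{n-1}\Bigr)d\mu_t\;\le\;\frac{1}{n-1}\int_{\Sigma_t} g(\Phi)\,\nabla_i\bigl(T_1^{ij}\nabla_j\Phi\bigr)\,d\mu_t
\end{equation*}
coming from the Newton--MacLaurin inequality $H_2\le H_1^2$. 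Only after this double integration by parts and cancellation do all surviving terms become manifestly nonpositive, namely $-\tfrac{1}{n-1}\int ug''|\nabla\Phi|^2$, $-\tfrac{1}{n-1}\int g'\,h(\nabla\Phi,\nabla\Phi)$, $-\tfrac{1}{n-1}\int g'\sigma_1|\nabla\Phi|^2$ and $-\int g''|\nabla\Phi|^2\phi'/H$, using $u>0$, the convexity of $\Sigma_t$, and $g'\ge0$, $g''\ge0$.

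In short, the missing ingredients are: the Newton--MacLaurin estimate applied to the $g(\Phi)\sigma_2$ term; the identity \eqref{s2.dT1} expressing $(n-2)\phi'\sigma_1-2u\sigma_2$ as a divergence; and the bookkeeping by which the two halves of the $2g'uHF$ contribution cancel both that divergence and the positive piece $+\tfrac{u}{n-1}g''|\nabla\Phi|^2$ of the $g''$ term. Without these the claim $dQ/dt\le0$ is unproved; with them, your outline becomes the paper's proof. The remaining parts of your proposal (area preservation, convergence, the limit computation, and the rigidity discussion via $|\nabla\Phi|\equiv0$) are consistent with the paper.
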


\begin{remark}
\begin{enumerate}
\item When $g(x)\equiv 1$, the left hand side of \eqref{s1.eq-thmH} reduces to the first quermassintegral $W_1(\Omega)=\int_\Sigma Hd\mu-(n-1)\text{vol}(\Omega)$. In this case, Theorem \ref{s1.thmH} reduces to the Minkowski inequality \eqref{s1.MinkHn} in hyperbolic space proved by Bendle-Guan-Li \cite{BGL18}. See Hu-Li-Wei \cite{HLW22} for an alternative proof for $h$-convex domains in $\mathbb{H}^n$.
\item When $g(x)=x$, then $A_g(\Omega)=\text{vol}(\Omega)$ and the left hand side of \eqref{s1.eq-thmH} reduces to $\int_\Sigma\Phi Hd\mu+(n-1)\text{vol}(\Omega)$. In this case, Theorem \ref{s1.thmH} reduces to the inequality \eqref{s1.MinkKW} proved previously by the authors \cite{KW23}.
\end{enumerate}
\end{remark}

\begin{theorem}\label{s1.thmSn}
Suppose that  $\Sigma$ is a smooth strictly convex hypersurface in $\mathbb{S}^n$ enclosing a bounded domain $\Omega$ that contains the origin. For any non-decreasing convex $C^2$ positive function $g$, we have
\begin{equation}\label{s1.eq-thmSn}
\int_{\Sigma} g(\Phi) H d \mu+(n-1)A_g(\Omega)\geq \chi\circ \zeta^{-1}\Big(\int_\Omega\phi'(r)dv\Big),
\end{equation}
where $\zeta(r)=\int_{B(r)} \phi'dv=\frac{1}{n}\omega_{n-1}\sin^nr$, and $\chi(r)=\int_{S(r)}g(\Phi)H+(n-1)A_g(B(r))$.  If either $g$ is strictly increasing or $g$ is strictly convex, then equality holds in \eqref{s1.eq-thmSn} if and only if $\Omega$ is a geodesic ball centered at the origin.
\end{theorem}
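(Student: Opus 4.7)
The plan is to adapt the locally constrained inverse curvature flow approach used in Theorem~\ref{s1.thmH} to the spherical setting. Evolve the given strictly convex hypersurface $\Sigma \subset \mathbb{S}^n$ by
\[
\partial_t X = \left(\frac{(n-1)\phi'(r)}{H} - u\right)\nu,
\]
where $u = \langle V,\nu\rangle$ is the support function. Geodesic spheres centered at the origin are stationary, the area $|\Sigma_t|$ is preserved along the flow via the Minkowski identity $(n-1)\int_\Sigma \phi'\,d\mu = \int_\Sigma uH\,d\mu$, and (paralleling the theory of such flows in $\mathbb{H}^n$) one expects strict convexity and the origin-enclosing property to be preserved, with long-time existence and smooth convergence $\Sigma_t \to S(r_\infty)$ for some $r_\infty \in (0,\pi/2)$.

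Two monotonicities drive the argument. First, for $\mathcal V(t) := \int_{\Omega_t}\phi'\,dv$, using $\partial_t r = Fu/\phi$,
\[
\tfrac{d}{dt}\mathcal V(t) = \int_{\Sigma_t}\phi'\!\left(\tfrac{(n-1)\phi'}{H}-u\right)d\mu \geq 0,
\]
with non-negativity coming from Brendle's warped-product Heintze-Karcher inequality applied with weight $\phi'$. Second, for
\[
\mathcal Q(t) := \int_{\Sigma_t} g(\Phi)H\,d\mu + (n-1)A_g(\Omega_t),
\]
compute $\frac{d}{dt}\mathcal Q(t)$ using $\partial_t\Phi = uF$, $\partial_t d\mu = ((n-1)\phi' - uH)\,d\mu$, the evolution equation $\partial_t H = -\Delta F - (|A|^2+(n-1))F$ in $\mathbb{S}^n$, integration by parts with $\Delta_\Sigma\Phi = (n-1)\phi' - Hu$ and $|\nabla_\Sigma\Phi|^2 = \phi^2-u^2$, and the divergence identity $\frac{d}{dt}A_g(\Omega) = \int_\Sigma(g'(\Phi)\phi' + g(\Phi))F\,d\mu$. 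After simplification the integrand collects into terms involving $\sigma_2$, $A(V^\top,V^\top)$, and the derivatives $g,g',g''$; showing that it integrates to a non-positive quantity is the main technical task, and should follow by combining the Newton-MacLaurin inequality $2(n-1)\sigma_2 \leq (n-2)H^2$ with the weighted Hsiung-Minkowski identity obtained from $\int_\Sigma \mathrm{div}_\Sigma(g(\Phi)T_1 V^\top)\,d\mu = 0$, the convexity of $g$ ($g''\geq 0$), the monotonicity of $g$ ($g'\geq 0$), and the convexity of $\Sigma$ (yielding $A(V^\top,V^\top)\leq H|V^\top|^2$).

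Once both monotonicities are in hand, smooth convergence $\Sigma_t \to S(r_\infty)$ gives $\int_{\Omega_0}\phi'\,dv \leq \zeta(r_\infty)$, hence $r_\infty \geq \zeta^{-1}(\int_{\Omega_0}\phi'\,dv)$, and since $\chi$ is strictly increasing on the hemispherical range,
\[
\mathcal Q(0) \geq \mathcal Q(\infty) = \chi(r_\infty) \geq \chi\circ\zeta^{-1}\!\left(\int_{\Omega_0}\phi'\,dv\right),
\]
which is \eqref{s1.eq-thmSn}. For the equality characterization, saturating under strict monotonicity or strict convexity of $g$ forces $\dot{\mathcal Q}\equiv 0$, which combined with the equality case of Newton-MacLaurin forces $\Sigma$ to be a geodesic sphere centered at the origin. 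The principal obstacle is the non-positivity of $\dot{\mathcal Q}$: the weighted rearrangement must precisely cancel the $g', g''$ contributions coupled to the intrinsic and extrinsic curvature of $\Sigma$, with the positive ambient curvature $K=1$ requiring careful accounting. The benchmark cases $g\equiv 1$ (where the argument reduces to pure Newton-MacLaurin plus the unweighted identity $2\int u\sigma_2\,d\mu = (n-2)\int H\phi'\,d\mu$) and $g(x)=x$ serve as useful consistency checks before tackling the general convex weight.
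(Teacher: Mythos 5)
Your overall strategy (a locally constrained inverse mean curvature flow, two monotone quantities, convergence to a geodesic sphere) matches the paper's, but you have chosen a different flow, and the paper is explicit that this choice is the crux in the sphere. You evolve by $\partial_t X = \bigl(\tfrac{(n-1)\phi'}{H}-u\bigr)\nu$, which is the spherical analogue of the Brendle--Guan--Li flow used in the hyperbolic case; the paper instead uses the Scheuer--Xia flow $\partial_t X = \bigl(\tfrac{n-1}{H}-\tfrac{u}{\phi'}\bigr)\nu$ of \eqref{s5.Flowk=1}. The difference is not cosmetic. First, the long-time existence and smooth convergence to a geodesic sphere that you invoke (``one expects strict convexity \dots to be preserved'') is a theorem of Scheuer--Xia for \emph{their} normalization, but is not known for yours; note that your flow preserves $|\Sigma_t|$, so if your argument closed it would prove \eqref{s1.eq-thmSn} with $\int_\Omega\phi'\,dv$ replaced by $|\Sigma|$, which the paper's closing remark explicitly flags as an open problem. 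Second, your claimed monotonicity $\tfrac{d}{dt}\int_{\Omega_t}\phi'\,dv = \int_{\Sigma_t}\phi'\bigl(\tfrac{(n-1)\phi'}{H}-u\bigr)\,d\mu\ge 0$ does not follow from Brendle's Heintze--Karcher inequality: that inequality gives $\int_{\Sigma}\bigl(\tfrac{(n-1)\phi'}{H}-u\bigr)\,d\mu\ge 0$ \emph{without} the extra weight $\phi'$, and no weighted version is available. The Scheuer--Xia normalization is chosen precisely so that the coarea computation produces the unweighted integrand to which Brendle's result applies.

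Beyond the choice of flow, the central analytic step --- the non-positivity of $\dot{\mathcal Q}$ --- is only sketched (``the main technical task \dots should follow by combining \dots''), whereas this is exactly where the paper does its work: with the Scheuer--Xia speed the $1/\phi'$ factors and the identity $\nabla_i\phi'=-\nabla_i\Phi$ (valid on $\mathbb{S}^n$ since $\phi'+\Phi=1$) generate extra terms whose signs must be verified one by one using $T_1^{ij}=\sigma_1g^{ij}-h^i_sg^{sj}$, the divergence-free property of $T_1$, identity \eqref{s2.dT1}, and the Newton--MacLaurin inequality. So as written the proposal contains two genuine gaps (unproven convergence of the chosen flow in $\mathbb{S}^n$, and an unjustified weighted Heintze--Karcher step) in addition to an unexecuted main estimate, and it does not reduce to the paper's argument by a mere reparametrization.
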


The inequalities \eqref{s1.eq-thmH} and \eqref{s1.eq-thmSn} will be proved using inverse mean curvature type flow. Due to the opposite sign of the ambient curvatures, we employ different flows in the hyperbolic space and in sphere separately. This is crucial in the proof of monotonicity of $\int_{\Sigma} g(\Phi) H d \mu+(n-1)A_g(\Omega)$ along the flows. To prove \eqref{s1.eq-thmH}, we use the flow
\begin{equation}\label{s4.Flowk=1}
\frac{\partial}{\partial t}X=\left(\frac{\phi'}{H}-\frac{u}{n-1}\right)\nu,
\end{equation}
which was introduced by Brendle, Guan and Li \cite{BGL18} (see also \cite[\S 4]{GL21}). The second author and Hu, Li \cite{HLW22} proved that for $h$-convex hypersurface in $\mathbb{H}^n$, the flow \eqref{s4.Flowk=1} preserves the $h$-convexity and converges to a geodesic sphere as $t\to\infty$. While for proving \eqref{s1.eq-thmSn}, we use the flow
\begin{equation}\label{s5.Flowk=1}
\frac{\partial}{\partial t}X=\left(\frac{n-1}{H}-\frac{u}{\phi'}\right)\nu
\end{equation}
which was studied by Scheuer and Xia \cite{SX19} and evolves any strictly convex hypersurface in sphere $\mathbb{S}^n$ to a geodesic sphere as $t\to\infty$.

As an application of the weighted inequalities derived in our work, we establish a sharp upper bound for the first non-zero eigenvalue of a class of differential operators associated with $k$-convex hypersurfaces $\Sigma$ in $\mathbb{R}^n$. For simplicity, we only state the result for the two-dimensional case. Readers are referred to Theorem \ref{eigen} for the general result in higher dimensions.

\begin{theorem}
For a smooth, closed, strictly convex curve $\gamma$ in $\mathbb{R}^2$, the first non-zero eigenvalue $\lambda_1$ of the operator $-\frac{1}{\kappa} \Delta$ on $\gamma$ satisfies
$$\lambda_1 \le \frac{\pi L}{L^2 - 2\pi A}, $$
where $\kappa, L$ denote the curvature and length of $\gamma$ respectively, and $A$ represents the area enclosed by $\gamma$. Equality holds if and only if $\gamma$ is a circle.
\end{theorem}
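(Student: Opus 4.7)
The plan is to exploit the variational characterization of the first non-zero eigenvalue of $L=-\tfrac{1}{\kappa}\Delta$ by using the coordinate functions of $\mathbb{R}^2$ as test functions, and then to apply the weighted inequality (1) from Example \ref{examples} to turn the resulting Rayleigh-quotient bound into the claimed estimate. Since the eigenvalue equation $Lf=\lambda f$ is $-\Delta f=\lambda\kappa f$, the natural $L^2$ inner product is $\langle f,h\rangle_\kappa=\int_\gamma fh\,\kappa\,ds$, which makes $L$ symmetric with the Rayleigh quotient
\begin{equation*}
\lambda_1=\inf\left\{\,\frac{\int_\gamma |\nabla f|^2\,ds}{\int_\gamma f^2\,\kappa\,ds}\;:\;f\in C^\infty(\gamma)\setminus\{0\},\ \int_\gamma f\,\kappa\,ds=0\,\right\}.
\end{equation*}

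First I would translate $\gamma$ so that the weighted centroid vanishes, i.e. $\int_\gamma x\,\kappa\,ds=\int_\gamma y\,\kappa\,ds=0$. Because $\tfrac{1}{2\pi}\kappa\,ds$ is a probability measure on $\gamma$ (using $\int_\gamma\kappa\,ds=2\pi$ for a strictly convex closed curve), its centroid lies in the convex hull $\overline\Omega$, so this translation places the origin inside $\Omega$, which is the hypothesis required to invoke Theorem \ref{s1.thmn=2}. With $X=(x,y)$ as coordinates on $\mathbb{R}^2$, the tangential gradient of $x$ equals $\langle T,e_1\rangle T$ (and similarly for $y$), so $|\nabla x|^2+|\nabla y|^2=|T|^2=1$. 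Therefore, using $f=x$ and $f=y$ as admissible test functions and the elementary inequality $\min\!\left(\frac{a_1}{b_1},\frac{a_2}{b_2}\right)\le\frac{a_1+a_2}{b_1+b_2}$ for positive $b_i$,
\begin{equation*}
\lambda_1\;\le\;\frac{\int_\gamma|\nabla x|^2\,ds+\int_\gamma|\nabla y|^2\,ds}{\int_\gamma(x^2+y^2)\,\kappa\,ds}\;=\;\frac{L}{\int_\gamma r^2\,\kappa\,ds}.
\end{equation*}

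Next I would apply the first inequality of Example \ref{examples} in the $\mathbb{R}^2$ list (the $g(x)=x$ instance of Theorem \ref{s1.thmn=2}), namely
\begin{equation*}
\int_\gamma \tfrac{r^2}{2}\kappa\,ds + A \;\ge\; \frac{L^2}{2\pi},
\end{equation*}
which rearranges to $\int_\gamma r^2\kappa\,ds\ge \tfrac{L^2}{\pi}-2A$. Note that $L^2-2\pi A\ge 4\pi A-2\pi A=2\pi A>0$ by the classical isoperimetric inequality, so the denominator is positive, and substituting gives
\begin{equation*}
\lambda_1\;\le\;\frac{L}{\tfrac{L^2}{\pi}-2A}\;=\;\frac{\pi L}{L^2-2\pi A}.
\end{equation*}

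For the equality case, if $\lambda_1=\tfrac{\pi L}{L^2-2\pi A}$ then equality must hold in the weighted Alexandrov-Fenchel inequality above; since $g(x)=x$ is strictly increasing, Theorem \ref{s1.thmn=2} forces $\gamma$ to be a geodesic circle centered at the (translated) origin. Conversely, for a circle of radius $R$ one checks $L=2\pi R$, $A=\pi R^2$, and the eigenfunctions of $-R\Delta$ on the circle are trigonometric with first non-zero eigenvalue $\lambda_1=1/R=\tfrac{\pi L}{L^2-2\pi A}$. The only subtlety in the whole argument is the centering step, which is what lets both the orthogonality condition of the Rayleigh quotient and the hypothesis ``$\gamma$ encloses the origin'' of Theorem \ref{s1.thmn=2} be met simultaneously; once that is arranged, everything else is direct.
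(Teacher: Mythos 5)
Your proposal is correct and follows essentially the same route as the paper: center so that $\int_\gamma x\,\kappa\,ds=0$, test the Rayleigh quotient of $-\frac{1}{\kappa}\Delta$ with the coordinate functions to get $\lambda_1\int_\gamma r^2\kappa\,ds\le L$, and then invoke the $g(x)=x$ case of Theorem \ref{s1.thmn=2} (equivalently the $n=2$, $k=1$ case of \eqref{ineq 3term}) to bound $\int_\gamma r^2\kappa\,ds$ from below. Your explicit justification that the weighted centroid lies inside $\Omega$ (so the hypothesis of Theorem \ref{s1.thmn=2} is met after translation) is a detail the paper leaves implicit, but otherwise the arguments coincide, including the treatment of equality.
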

\begin{ack}
The first author was supported by the UOW Early-Mid Career Researcher Enabling Grant and the UOW Advancement and Equity Grant Scheme for Research 2024. The second author was supported by National Key Research and Development Program of China 2021YFA1001800 and 2020YFA0713100, and the Fundamental Research Funds for the Central Universities.
\end{ack}

\section{Preliminaries}

In this section, we review some definitions and basic formulas for hypersurfaces in space forms, and then derive a general evolution equation for the weighted curvature integral $\int_{\Sigma} g(\Phi) \sigma_k d \mu$.

\subsection{Hypersurfaces in space forms}
Given a smooth bounded domain $\Omega$ in a space form $M^n(K)$, its boundary $\Sigma=\partial\Omega$ is a smooth embedded hypersurface. The support function of $\Sigma$ is defined by $u=\langle V,\nu\rangle$, where $\nu$ is the outward unit normal of $\Sigma$ and $V=\bar{\nabla}\Phi$ is the conformal Killing field on $M^n(K)$. Note that in Euclidean space, $V$ is just the position vector.  We denote by $\kappa=(\kappa_1,\cdots,\kappa_{n-1})$ the principal curvatures of $\Sigma$, which are defined as the eigenvalues of the Weingarten tensor $h_i^j=g^{jk}h_{ik}$. If we denote $\sigma_k$ as the $k$th elementary symmetric polynomial of an $n$-vector, then $H=\sigma_1(\kappa)=\kappa_1+\cdots+\kappa_{n-1}$ is the mean curvature of $\Sigma$, and $\sigma_k(\kappa)$ is called the $k$th mean curvature of $\Sigma$. The definition also extends to the Weingarten tensor by defining $\sigma_k(h_i^j)=\sigma_k(\lambda(h_i^j))$, where $\lambda$ is a map sending a symmetric matrix to its eigenvalues. The Newton tensor of $h_i^j$ is defined by $T_k^{ij}=\frac{\partial\sigma_{k+1}}{\partial h_{i}^j}$ and satisfies the iteration (see \cite{Re73})
\begin{equation}\label{s2.NT0}
T_k^{ij}=\sigma_kg^{ij}-T_{k-1}^{ik}h_{k}^j.
\end{equation}
It is divergence free due to the Codazzi equation in space forms.
\begin{lemma}[see \S 2 in \cite{HS99}]\label{s2.lemsk}
For $k=1,\cdots,n-1$, we have
\begin{align}
\sum_{ij}T_{k-1}^{ij}h_{ij}=&k\sigma_k\\
\sum_{ij}T_{k-1}^{ij}h_{ij}=&(n-k)\sigma_{k-1}\\
\sum_{ij}T_{k-1}^{ij}(h^2)_{ij}=&\sigma_1\sigma_k-(k+1)\sigma_{k+1},
\end{align}
where $(h^2)_i^j=\sum_{k=1}^{n-1}h_i^kh_k^j$ and by convention we set $\sigma_k=0$ for $k>n-1$.
\end{lemma}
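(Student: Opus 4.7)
The plan is to reduce all three identities to elementary combinatorial statements about the elementary symmetric polynomials $\sigma_k$ in the principal curvatures $\kappa_1,\dots,\kappa_{n-1}$, exploiting the fact that at any given point of $\Sigma$ we may choose a local orthonormal frame diagonalizing the Weingarten tensor. (I read the second identity as pairing $T_{k-1}$ with the metric $g_{ij}$, i.e.\ $\sum_{ij}T_{k-1}^{ij}g_{ij}=(n-k)\sigma_{k-1}$, since pairing with $h_{ij}$ already appears as the first identity.) Since both sides of each identity are tensorial, it suffices to verify them in such a principal frame, where $h_i^j=\kappa_i\delta_i^j$. In that frame the Newton tensor is also diagonal with entries
\[
T_{k-1}^{ii}=\frac{\partial\sigma_k}{\partial\kappa_i}=\sigma_{k-1}(\kappa\mid i),
\]
where $\sigma_{k-1}(\kappa\mid i)$ denotes the $(k-1)$-st elementary symmetric polynomial in the $(n-2)$ variables $\kappa_1,\dots,\widehat{\kappa_i},\dots,\kappa_{n-1}$.

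First I would establish the two linear identities. For (i), write $\sum_i \kappa_i\, \sigma_{k-1}(\kappa\mid i)$ and observe that on the right, each monomial $\kappa_{j_1}\cdots\kappa_{j_k}$ of $\sigma_k$ arises exactly $k$ times, once for each index in the multiset; hence the sum equals $k\sigma_k$. For (ii), $\sum_i \sigma_{k-1}(\kappa\mid i)$ counts each monomial of $\sigma_{k-1}$ once for every variable \emph{not} in it, giving the factor $n-1-(k-1)=n-k$; hence the sum equals $(n-k)\sigma_{k-1}$.

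For the quadratic identity (iii), I have two routes and would include whichever is cleaner. The combinatorial route starts from the splitting $\sigma_k(\kappa)=\sigma_k(\kappa\mid i)+\kappa_i\sigma_{k-1}(\kappa\mid i)$, which rearranges to $\kappa_i\sigma_{k-1}(\kappa\mid i)=\sigma_k-\sigma_k(\kappa\mid i)$; multiplying by $\kappa_i$ and summing yields
\[
\sum_i \kappa_i^{2}\sigma_{k-1}(\kappa\mid i)=\sigma_1\sigma_k-\sum_i \kappa_i\sigma_k(\kappa\mid i),
\]
and the same monomial-counting argument used above shows $\sum_i \kappa_i\sigma_k(\kappa\mid i)=(k+1)\sigma_{k+1}$, giving (iii). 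Alternatively, one can bootstrap directly from the recursion \eqref{s2.NT0}: contract $T_k^{ij}=\sigma_k g^{ij}-T_{k-1}^{ik}h_k^{j}$ with $h_{ij}$ to obtain $T_k^{ij}h_{ij}=\sigma_k\sigma_1-T_{k-1}^{ij}(h^2)_{ij}$, and then apply (i) with $k$ replaced by $k+1$ on the left to read off $T_{k-1}^{ij}(h^2)_{ij}=\sigma_1\sigma_k-(k+1)\sigma_{k+1}$.

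There is no real obstacle here; the only subtlety is keeping track of the counting and confirming the convention $\sigma_k=0$ for $k>n-1$, which makes the identities degenerate correctly at the boundary case $k=n-1$. Finally, tensoriality extends the pointwise identities in the principal frame to the general coordinate form stated in the lemma.
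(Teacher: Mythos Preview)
Your proof is correct and complete. Note that the paper itself does not prove this lemma; it is stated with a citation to \cite{HS99}, so there is no ``paper's own proof'' to compare against. Your argument via diagonalization in a principal frame and monomial counting is the standard one, and your observation that the second identity must read $\sum_{ij}T_{k-1}^{ij}g_{ij}=(n-k)\sigma_{k-1}$ (with $g_{ij}$ rather than $h_{ij}$) is right---this is a typo in the stated lemma, as the identity with $h_{ij}$ is already the first line.
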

The support function $u$ and $\Phi$ satisfy the following formulae (see \cite[\S 2]{GL15})
\begin{lemma}
Let $\Sigma$ be a smooth closed hypersurface in the space form $M^n(K)$. Then
\begin{align}
&\nabla_iu=h_i^j\nabla_j\Phi\label{s2.du}\\
& \Delta \Phi=(n-1)\phi'(r)-uH,\label{s2.DeltaPhi}\\
&\nabla_i\left(T_k^{ij}\nabla_j\Phi\right)=(n-1-k)\phi'\sigma_k(\kappa)-(k+1)u\sigma_{k+1}(\kappa). \label{s2.dT1}
\end{align}
where $\nabla, \Delta$ denote the Levi-Civita connection and Laplacian operator on $\Sigma$ with respect to the induced metric.
\end{lemma}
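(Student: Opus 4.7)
The plan is to prove each of the three formulas by unpacking the ambient conformal Killing identity $\bar{\nabla}V = \phi'(r)\bar{g}$ together with the Gauss--Weingarten relations for $\Sigma \subset M^n(K)$. The key observation is that this identity upgrades to the Hessian formula $\bar{\nabla}^2 \Phi = \phi'(r)\bar{g}$ on the ambient space, and all three identities are essentially consequences of applying this to various intrinsic tensors on $\Sigma$.

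For the first formula $\nabla_i u = h_i^j \nabla_j \Phi$, I would differentiate $u = \langle V, \nu\rangle$ along a tangent vector $e_i$. Leibniz gives $\nabla_i u = \langle \bar{\nabla}_{e_i} V, \nu\rangle + \langle V, \bar{\nabla}_{e_i}\nu\rangle$. The first term vanishes since $\bar{\nabla}_{e_i} V = \phi'(r)\, e_i$ is tangential; the Weingarten relation $\bar{\nabla}_{e_i}\nu = h_i^j e_j$ then yields $\nabla_i u = h_i^j \langle V, e_j\rangle = h_i^j \nabla_j \Phi$, since the tangential part of $V$ equals the induced gradient of $\Phi|_\Sigma$. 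For the second formula, I would compute the intrinsic Hessian of $\Phi|_\Sigma$ using the Gauss formula $\bar{\nabla}_X Y = \nabla_X Y - h(X,Y)\nu$, obtaining
\[
\nabla_i \nabla_j \Phi \;=\; \bar{\nabla}^2\Phi(e_i,e_j) - h_{ij}\langle \bar{\nabla}\Phi, \nu\rangle \;=\; \phi'(r)\, g_{ij} - u\, h_{ij}.
\]
Tracing with $g^{ij}$ and using $\mathrm{tr}_g(g) = n-1$ produces $\Delta\Phi = (n-1)\phi'(r) - uH$.

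For the third formula, I would use that the Newton tensor $T_k^{ij}$ is divergence-free on hypersurfaces in constant curvature (a standard Codazzi computation, see \cite{Re73}). Consequently
\[
\nabla_i\!\left(T_k^{ij}\nabla_j\Phi\right) \;=\; T_k^{ij}\nabla_i\nabla_j\Phi \;=\; \phi'\,T_k^{ij} g_{ij} \;-\; u\, T_k^{ij} h_{ij},
\]
by the Hessian formula just derived. Invoking Lemma \ref{s2.lemsk} with the index shifted up by one, namely $T_k^{ij}g_{ij} = (n-1-k)\sigma_k$ and $T_k^{ij}h_{ij} = (k+1)\sigma_{k+1}$, immediately yields the claimed identity. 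The only real subtlety throughout is to keep the sign convention for the second fundamental form consistent with $\bar{\nabla}_X\nu = h_i^j e_j$ so that the ``$-uH$'' and ``$-(k+1)u\sigma_{k+1}$'' terms carry the correct sign; once that convention is fixed, all three identities reduce to tracing the single ambient Hessian formula $\bar{\nabla}^2\Phi = \phi'\bar{g}$ against $g^{ij}$ and $T_k^{ij}$ respectively.
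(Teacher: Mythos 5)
Your proposal is correct and is the standard derivation: the paper itself does not prove this lemma but cites \cite{GL15}*{\S 2}, and the argument there is exactly the one you give, namely tracing the ambient Hessian identity $\bar{\nabla}^2\Phi=\phi'\bar g$ against $g^{ij}$ and the divergence-free Newton tensor $T_k^{ij}$ after passing to the intrinsic Hessian via the Gauss--Weingarten formulas. One small point in your favour: you correctly use $T_k^{ij}g_{ij}=(n-1-k)\sigma_k$, i.e.\ you read the second identity of Lemma \ref{s2.lemsk} as a trace against the metric (the paper's statement repeats $h_{ij}$ there, which is evidently a typo).
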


A smooth closed hypersurface $\Sigma$ is called $k$-convex if its principal curvatures are contained in the Garding cone $\Gamma_k^+$ defined by
\begin{equation*}
\Gamma_+^+=\{\kappa\in\mathbb{R}^{n-1},~|~\sigma_j(\kappa)>0,\quad \forall~j=1,\cdots,k\}.
\end{equation*}
Denote
\begin{equation}\label{s2.Hk}
H_k(\kappa)=\frac{\sigma_k(\kappa)}{\binom{n-1}{k}},\quad k=1,\cdots, n-1
\end{equation}
the $k$th normalized mean curvature of $\Sigma$. We have the following Newton-MacLaurin inequality (see \S 2 in \cite{Gu14}).
\begin{lemma}
For any integers $1\leq \ell\leq k\leq n-1$ and $\kappa\in \Gamma_k^+$, we have
\begin{align}
&H_{k+1}H_{\ell-1}\leq H_k H_\ell, \label{s2.NT1}\\
& H_{k+1}\leq H_k^{\frac{k+1}{k}}.\label{s2.NT2}
\end{align}
Equality holds if and only if $\kappa=c(1,\cdots,1)$ for some constant $c>0$.
\end{lemma}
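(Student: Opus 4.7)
The plan is to reduce both inequalities to the classical Newton inequality
\[
H_j^2 \ge H_{j-1}\, H_{j+1} \quad \text{for all } \kappa \in \Gamma_k^+ \text{ and } 1 \le j \le k,
\]
with the convention $H_0 = 1$, and with equality if and only if $\kappa_1 = \cdots = \kappa_{n-1}$. This inequality is classical; on the Garding cone it is the version stated, for example, in \cite{Gu14}, and it is the only non-elementary input. A standard derivation uses Rolle's theorem applied to the polynomial $\prod_i (t - \kappa_i)$ (or to suitable partial derivatives of $\sigma_n(x_1,\ldots,x_n)$) to propagate the real-rooted case inductively to all Garding cones $\Gamma_j^+$ with $j \le k$.

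Granted Newton's inequality, define the ratios
\[
r_j := \frac{H_j}{H_{j-1}}, \qquad j = 1, 2, \ldots, k+1.
\]
Since $\kappa \in \Gamma_k^+$ forces $H_j > 0$ for $0 \le j \le k$, these ratios are well-defined (with $r_{k+1}$ permissible since we only use $H_k > 0$ in the denominator). The Newton inequality $H_j^2 \ge H_{j-1} H_{j+1}$ is precisely $r_j \ge r_{j+1}$, so the sequence $(r_j)$ is non-increasing on $\{1,\ldots,k+1\}$.

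The first inequality \eqref{s2.NT1} is now immediate: for $1 \le \ell \le k$, monotonicity gives $r_{k+1} \le r_\ell$, i.e.\ $H_{k+1}/H_k \le H_\ell/H_{\ell-1}$, and cross-multiplying yields $H_{k+1} H_{\ell-1} \le H_k H_\ell$. For the second inequality \eqref{s2.NT2}, observe that the product telescopes:
\[
r_1 r_2 \cdots r_k \;=\; \frac{H_k}{H_0} \;=\; H_k,
\]
so the geometric mean of $r_1,\ldots,r_k$ equals $H_k^{1/k}$. Since $(r_j)$ is non-increasing, the last term is at most the mean: $r_k \le H_k^{1/k}$. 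Combining with $r_{k+1} \le r_k$ gives $H_{k+1}/H_k \le H_k^{1/k}$, whence $H_{k+1} \le H_k^{(k+1)/k}$.

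For the equality case, suppose equality holds in either \eqref{s2.NT1} or \eqref{s2.NT2}. Tracing the chain of inequalities, equality forces $r_j = r_{j+1}$ for some $j \in \{1,\ldots,k\}$, i.e.\ $H_j^2 = H_{j-1} H_{j+1}$. By the rigidity of Newton's inequality, this already implies $\kappa_1 = \cdots = \kappa_{n-1} = c$ for some $c > 0$. The main obstacle, then, is really only the Newton inequality on the Garding cone itself; once it is in hand, the two desired inequalities follow from one-line manipulations of the monotone ratio sequence $(r_j)$.
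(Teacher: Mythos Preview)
The paper does not supply its own proof of this lemma; it simply records the inequalities with a reference to \cite{Gu14}. Your argument is correct and is in fact the standard derivation: Newton's inequality $H_j^2\ge H_{j-1}H_{j+1}$ (valid on $\Gamma_k^+$ for $1\le j\le k$, with equality only at umbilic points) yields the monotone ratio sequence $r_j=H_j/H_{j-1}$, from which \eqref{s2.NT1} is the comparison $r_{k+1}\le r_\ell$ and \eqref{s2.NT2} follows from $r_{k+1}\le r_k\le (r_1\cdots r_k)^{1/k}=H_k^{1/k}$. The equality discussion is also fine, since positivity of $H_1$ on $\Gamma_k^+$ forces the common value $c$ to be positive. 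There is nothing to compare against in the paper beyond the citation, and your write-up is exactly the argument one finds in the cited lecture notes.
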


\subsection{Evolution equation}
Let $X_0: \mathbb{S}^{n-1} \rightarrow M^n(K)$ be a smooth, closed star-shaped hypersurface. If $K=1$, we also assume $X_0$ lies in the open hemisphere $\mathbb{S}^n_+$. We consider the flow $X: \mathbb{S}^{n-1} \times[0, T) \rightarrow M^n(K)$ satisfying
\begin{equation}\label{s3.flow-F}
\frac{\partial}{\partial t}X=F\nu,
\end{equation}
where the speed function $F$ depends on the curvature and the position of $\Sigma_t=X(\mathbb{S}^{n-1},t)$. The following lemma gives a variational equation of the weighted curvature integral along the flow \eqref{s3.flow-F}. See Reilly \cite[Theorem A]{Re73} for a general variational formula in the Euclidean space for curvature functions involving potential $\Phi$ and support function.
\begin{lemma}\label{s2.lem-nge3}
For $n\geq 2$, along the flow \eqref{s3.flow-F}, we have
\begin{align}\label{s2.lemevlsk1}
& \frac{d}{d t} \int_{\Sigma_t} g(\Phi) \sigma_k d\mu_t\nonumber\\
& =\int_{\Sigma}\Big(g'(\Phi) u \sigma_k-g'(\Phi)\nabla_i\left(T_{k-1}^{ij}\nabla_j\Phi\right)-g''(\Phi)T_{k-1}^{ij}\nabla_i\Phi\nabla_j\Phi\nonumber\\
&\qquad+ (k+1)g(\Phi) \sigma_{k+1}-(n-k)Kg(\Phi)\sigma_{k-1}\Big) Fd\mu_t
\end{align}
for $k=1,\cdots,n-1$. Furthermore, for the mean curvature case $k=1$, we have
\begin{align}\label{s3.evl1}
& \frac{d}{d t} \left(\int_{\Sigma_t} g(\Phi) Hd\mu_t+(n-1)A_g(\Omega_t)\right)\nonumber\\
=&\int_{\Sigma_t}\biggl(2g'(\Phi) u H+2g(\Phi)\sigma_2(\kappa)-g''(\Phi)|\nabla\Phi|^2\biggr)Fd\mu_t.
\end{align}
By convention, $\sigma_k=0$ for $k>n-1$.
\end{lemma}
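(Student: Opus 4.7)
The plan is a direct computation: differentiate $\int_{\Sigma_t}g(\Phi)\sigma_k\,d\mu_t$ under the integral sign using the standard evolution equations for $\Phi$, $d\mu_t$ and $\sigma_k$ along $\partial_t X=F\nu$, then integrate by parts twice on the Hessian-of-$F$ term using the fact that the Newton tensor $T_{k-1}$ is divergence free in a space form (a direct consequence of the Codazzi equation). The specific ingredients I will invoke are: $\partial_t\Phi=\langle\bar\nabla\Phi,\nu\rangle F=uF$, $\partial_t d\mu_t=HF\,d\mu_t$, and the well-known evolution of the Weingarten tensor in $M^n(K)$,
\begin{equation*}
\partial_t h_i^{\,j}=-\nabla_i\nabla^j F-F(h^2)_i^{\,j}-KF\,\delta_i^{\,j},
\end{equation*}
which, combined with the trace identities in Lemma \ref{s2.lemsk}, gives
\begin{equation*}
\partial_t\sigma_k=-T_{k-1}^{ij}\nabla_i\nabla_j F-F\bigl(\sigma_1\sigma_k-(k+1)\sigma_{k+1}\bigr)-(n-k)KF\sigma_{k-1}.
\end{equation*}

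For the first identity \eqref{s2.lemevlsk1}, I combine the three ingredients above. The contribution $-F\sigma_1\sigma_k$ coming from $\partial_t\sigma_k$ cancels cleanly against the $H\sigma_k F=\sigma_1\sigma_k F$ coming from $\partial_t d\mu_t$, leaving only the Hessian term, the $(k+1)g(\Phi)\sigma_{k+1}F$ term, the $-(n-k)Kg(\Phi)\sigma_{k-1}F$ term, and a boundary-free $g'(\Phi)u\sigma_k F$ piece from $\partial_t g(\Phi)$. The remaining work is to rewrite $-\int g(\Phi)T_{k-1}^{ij}\nabla_i\nabla_j F\,d\mu$. Using $\nabla_i T_{k-1}^{ij}=0$, one integration by parts turns it into $\int g'(\Phi) T_{k-1}^{ij}\nabla_i\Phi\,\nabla_j F\,d\mu$; a second integration by parts, now moving the derivative off $F$, produces precisely the two terms $-g'(\Phi)\nabla_i(T_{k-1}^{ij}\nabla_j\Phi)$ and $-g''(\Phi)T_{k-1}^{ij}\nabla_i\Phi\nabla_j\Phi$, using $\nabla_i g(\Phi)=g'(\Phi)\nabla_i\Phi$ and the symmetry of $T_{k-1}^{ij}$. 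Assembling these gives \eqref{s2.lemevlsk1}.

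For the refined $k=1$ identity \eqref{s3.evl1}, I specialise and then add the variation of the weighted volume. In this case $T_0^{ij}=g^{ij}$, so by \eqref{s2.DeltaPhi} one has $\nabla_i(T_0^{ij}\nabla_j\Phi)=\Delta\Phi=(n-1)\phi'-uH$, and $T_0^{ij}\nabla_i\Phi\nabla_j\Phi=|\nabla\Phi|^2$. Substituting into \eqref{s2.lemevlsk1} with $k=1$ yields
\begin{equation*}
\frac{d}{dt}\int_{\Sigma_t} g(\Phi) H\,d\mu_t=\int_{\Sigma_t}\!\bigl(2g'(\Phi)uH-(n-1)g'(\Phi)\phi'-g''(\Phi)|\nabla\Phi|^2+2g(\Phi)\sigma_2-(n-1)Kg(\Phi)\bigr)F\,d\mu_t.
\end{equation*}
For $A_g(\Omega_t)$ I use the standard boundary-variation formula for a domain whose boundary moves with normal speed $F$: from \eqref{s3.Ag},
\begin{equation*}
\frac{d}{dt}A_g(\Omega_t)=\int_{\Sigma_t}\bigl(g'(\Phi)\phi'+Kg(\Phi)\bigr)F\,d\mu_t.
\end{equation*}
Multiplying by $n-1$ and adding, the $(n-1)g'(\Phi)\phi'$ and $(n-1)Kg(\Phi)$ terms cancel exactly, leaving \eqref{s3.evl1}.

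There is no deep obstacle here; the proof is really a careful bookkeeping exercise. The part that requires the most care is sign-tracking in the evolution of $h_i^{\,j}$ and the curvature-sign contribution $-KF\delta_i^{\,j}$, since this is what produces the $-(n-k)K g(\Phi)\sigma_{k-1}$ term in \eqref{s2.lemevlsk1} and drives the exact cancellation with $(n-1)\partial_t A_g(\Omega_t)$ in \eqref{s3.evl1}. The second subtle point is remembering to use divergence-freeness of $T_{k-1}$ in both integrations by parts, so that no extra curvature-derivative terms appear; this is guaranteed by the Codazzi equation in a space form.
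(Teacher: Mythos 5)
Your proposal is correct and follows essentially the same route as the paper: the same evolution equations for $\Phi$, $h_i^{\,j}$ and $d\mu_t$, the same trace identities from Lemma \ref{s2.lemsk}, the same double integration by parts on the Hessian term using the divergence-free Newton tensor, and the same specialisation via $T_0^{ij}=g^{ij}$ together with the co-area formula for $A_g(\Omega_t)$ in the $k=1$ case. All signs and cancellations check out against the paper's computation.
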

\proof
By the definition of $\Phi$, we first have that along \eqref{s3.flow-F}
\begin{equation*}
\partial_t\Phi=\langle \phi(r)\partial_r, \partial_tX\rangle =Fu.
\end{equation*}
The evolution of Weingarten tensor $h_i^j$ and area form $d\mu_t$ is well known (see \cite[Theorem 3-15]{And94}):
\begin{align*}
\partial_th_i^j=&-\nabla^j\nabla_i F-F\left((h^2)_i^j+K\delta_i^j \right)\\
\partial_td\mu_t=&F\sigma_1d\mu_t.
\end{align*}
Then using Lemma \ref{s2.lemsk} and the divergence-free property of $T_{k-1}^{ij}$, we have
\begin{align}\label{s2.evlsk1}
& \frac{d}{d t} \int_{\Sigma_t} g(\Phi) \sigma_k d\mu_t\nonumber\\
& =\int_{\Sigma_t}\left(\frac{\partial}{\partial t} g(\Phi)\right) \sigma_k d\mu_t+\int_{\Sigma_t} g(\Phi) \frac{\partial \sigma_k}{\partial t} d\mu_t+\int_{\Sigma_t} g(\Phi) \sigma_k\sigma_1Fd\mu_t\nonumber\\
& =\int_{\Sigma_t} g'(\Phi) u F \sigma_kd\mu_t+\int_{\Sigma_t} g(\Phi)T_{k-1}^{ij}\left(-\nabla_i\nabla_j F-F\left((h^2)_{ij}+Kg_{ij}\right)\right)d\mu_t\nonumber\\
&\qquad +\int_{\Sigma_t} g(\Phi) F \sigma_1\sigma_k d\mu_t\nonumber\\
& =\int_{\Sigma}\Big(g'(\Phi) u \sigma_k-\nabla_i\left(T_{k-1}^{ij}\nabla_j g(\Phi)\right)+ g(\Phi)\left(\sigma_1\sigma_k-T_{k-1}^{ij}((h^2)_{ij}+Kg_{ij})\right)\Big) F d\mu_t\nonumber\\
& =\int_{\Sigma}\Big(g'(\Phi) u \sigma_k-g'(\Phi)\nabla_i\left(T_{k-1}^{ij}\nabla_j\Phi\right)-g''(\Phi)T_{k-1}^{ij}\nabla_i\Phi\nabla_j\Phi\nonumber\\
&\qquad+ (k+1)g(\Phi) \sigma_{k+1}-(n-k)Kg(\Phi)\sigma_{k-1}\Big) Fd\mu_t.
\end{align}
This proves the equation \eqref{s2.lemevlsk1}.

For $k=1$, using \eqref{s2.dT1}, we can further rewrite the above equation and obtain
\begin{align}\label{s3.evl-gH}
\frac{d}{d t} \int_{\Sigma_t} g(\Phi) Hd\mu_t=&\int_{\Sigma}\biggl(2g'(\Phi) u H+2g(\Phi)\sigma_2-g''(\Phi)|\nabla\Phi|^2\nonumber\\
&\quad -(n-1)\Big(\phi'g'(\Phi)+g(\Phi) K\Big) \biggr) Fd\mu_t.
\end{align}
By the definition \eqref{s3.Ag} of $A_g$, the co-area formula implies that
\begin{equation}\label{s3.evl-Ag}
\frac{d}{dt}A_g(\Omega_t)=\int_{\Sigma_t}\left(g'(\Phi)\phi'+Kg(\Phi) \right)Fd\mu_t.
\end{equation}
Combining \eqref{s3.evl-gH} and \eqref{s3.evl-Ag} gives \eqref{s3.evl1}.
\endproof

\section{Weighted Alexandrov-Fenchel type inequalities in $\mathbb{R}^n$}\label{sec.3}
In this section, we first consider the Euclidean space case. We prove Theorem \ref{s1.thmRn} on the weighted Alexandrov-Fenchel type inequality with convex weight for hypersurfaces in $\mathbb{R}^n$.

For convenience of notations, in this section we use the normalized $k$th mean curvature $H_k=\binom{n-1}{k}^{-1}\sigma_k$. We will use the normalized inverse mean curvature flow
\begin{equation}\label{s4.IMCF}
\frac{\partial}{\partial t}X=\left(\frac{H_{k-1}}{H_k}-u\right)\nu
\end{equation}
to prove \eqref{s1.eq-thm}. This flow is equivalent to the inverse curvature flow
\begin{equation*}
\frac{\partial}{\partial t}X=\frac{H_{k-1}}{H_k}\nu
\end{equation*}
introduced by Gerhardt \cite{Ge90} and Urbas \cite{Ur90}, up to a rescaling. If the initial hypersurface is star-shaped and $k$-convex, the flow \eqref{s4.IMCF} converges to a round sphere as $t\to\infty$.  Note that when $n=2$, a closed convex curve in $\mathbb{R}^2$ is star-shaped with respect to some point centered inside the enclosed domain and is $k$-convex for $k=n-1=1$.

In the Euclidean space, $\phi'(r)=1$, then \eqref{s2.dT1} is equivalent to
\begin{equation}\label{s4.dTk}
\nabla_i\left(T_{k-1}^{ij}\nabla_j\Phi\right)=k\binom{n-1}{k}\left(H_{k-1}-uH_k\right).
\end{equation}

\begin{proof}[Proof of Theorem \ref{s1.thmRn}]
By the general evolution equation \eqref{s2.lemevlsk1} and noting that the ambient curvature $K=0$ in $\mathbb{R}^n$, along the flow \eqref{s4.IMCF}, we have
\begin{align}
& \frac{d}{d t} \int_{\Sigma_t} g(\Phi) H_k d\mu_t\nonumber\\
& =\int_{\Sigma_t}\Bigg(g'(\Phi) u H_k-\frac{1}{\binom{n-1}{k}}g'(\Phi)\nabla_i\left(T_{k-1}^{ij}\nabla_j\Phi\right)\nonumber\\
&\qquad-\frac{1}{\binom{n-1}{k}}g''(\Phi)T_{k-1}^{ij}\nabla_i\Phi\nabla_j\Phi+ (n-1-k)g(\Phi) H_{k+1}\Bigg) \times \left(\frac{H_{k-1}}{H_k}-u\right)d\mu_t\nonumber\\
&=\int_{\Sigma_t}\Bigg(kg'(\Phi) u\left(H_{k-1}-uH_k\right)d\mu_t -(k-1)\int_{\Sigma_t}g'(\Phi)H_{k-1}\left(\frac{H_{k-1}}{H_k}-u\right)d\mu_t\nonumber\\
&\quad -\int_{\Sigma_t}\frac{g'(\Phi)}{H_k}\left(H_{k-1}-uH_k\right)^2d\mu_t\nonumber\\
&\quad -\frac{1}{\binom{n-1}{k}}\int_{\Sigma_t}g''(\Phi)T_{k-1}^{ij}\nabla_i\Phi\nabla_j\Phi\left(\frac{H_{k-1}}{H_k}-u\right)d\mu_t\nonumber\\
&\quad + (n-1-k)\int_{\Sigma_t}g(\Phi)H_{k+1}\left(\frac{H_{k-1}}{H_k}-u\right)d\mu_t\nonumber\\
=&I+II+III+IV+V,
\end{align}
where in the second equality we used \eqref{s4.dTk}. Notice that we have $\sigma_k=0$ for $k>n-1$. This includes the case $n=2$.  We estimate the five terms $I, \cdots, V$ separately.  Since $g'\geq 0$ and $H_k>0$ on $\Sigma_t$, the third term $III$ is clearly non-positive. Using \eqref{s4.dTk} and \eqref{s2.du}, we can estimate the first term $I$ as follows:
\begin{align*}
I=&\frac{1}{\binom{n-1}{k}}\int_{\Sigma_t}g'u\nabla_i\left(T_{k-1}^{ij}\nabla_j\Phi\right)\\
=&-\frac{1}{\binom{n-1}{k}}\int_{\Sigma_t}T_{k-1}^{ij} \nabla_i\Phi \left(g''u\nabla_j\Phi+g'h_j^k\nabla_k\Phi \right)\\
=&-\frac{1}{\binom{n-1}{k}}\int_{\Sigma_t}ug''T_{k-1}^{ij}\nabla_i\Phi\nabla_j\Phi-\frac{1}{\binom{n-1}{k}}\int_{\Sigma_t}g'T_{k-1}^{ij}h_j^k\nabla_i\Phi\nabla_i\Phi.
\end{align*}
Since the Newton operator satisfies $$T_k^{ij}=\sigma_kg^{ij}-T_{k-1}^{ik}h_{k}^j,$$ we can rewrite the second term and obtain
\begin{align*}
I=&-\frac{1}{\binom{n-1}{k}}\int_{\Sigma_t}ug''T_{k-1}^{ij}\nabla_i\Phi\nabla_j\Phi-\frac{1}{\binom{n-1}{k}}\int_{\Sigma_t}g'(\Phi)\sigma_k|\nabla\Phi|^2d\mu_t\\
&\qquad +\frac{1}{\binom{n-1}{k}}\int_{\Sigma_t}g'(\Phi)T_k^{ij}\nabla_i\Phi\nabla_j\Phi d\mu_t.
\end{align*}
As $T_k$ is divergence free, by integration by parts and using \eqref{s4.dTk}, we have
\begin{align*}
& \frac{1}{\binom{n-1}{k}}\int_{\Sigma_t}g'(\Phi)T_k^{ij}\nabla_i\Phi\nabla_j\Phi \\
=& \frac{1}{\binom{n-1}{k}}\int_{\Sigma_t}T_{k}^{ij}\nabla_ig(\Phi)\nabla_j\Phi\\
=& - \frac{1}{\binom{n-1}{k}}\int_{\Sigma_t}g(\Phi)\nabla_i\left(T_k^{ij}\nabla_j\Phi\right)\\
=& -(n-1-k)\int_{\Sigma_t}g(\Phi)\left(H_k-uH_{k+1}\right).
\end{align*}
Therefore,
\begin{align}\label{s4.I}
I=&-\frac{1}{\binom{n-1}{k}}\int_{\Sigma_t}ug''T_{k-1}^{ij}\nabla_i\Phi\nabla_j\Phi-\frac{1}{\binom{n-1}{k}}\int_{\Sigma_t}g'(\Phi)\sigma_k|\nabla\Phi|^2d\mu_t\nonumber\\
& -(n-1-k)\int_{\Sigma_t}g(\Phi)\left(H_k-uH_{k+1}\right).
\end{align}

When $k=1$, the second term $II$ vanishes. When $k>1$,  by the Newton-MacLaurin inequality  \eqref{s2.NT1} and noting that $g'\geq 0$, the second term satisfies
\begin{align}
II\leq &-(k-1)\int_{\Sigma_t}g'(\Phi)\left(H_{k-2}-uH_{k-1}\right)\\
=& -\frac{1}{\binom{n-1}{k-1}}\int_{\Sigma_t}g'(\Phi)\nabla_i\left(T_{k-2}^{ij}\nabla_j\Phi\right)\nonumber\\
=& \frac{1}{\binom{n-1}{k-1}}\int_{\Sigma_t}g''(\Phi)T_{k-2}^{ij}\nabla_i\Phi\nabla_j\Phi.
\end{align}
Since $g\geq 0$, the Newton-MacLaurin inequality \eqref{s2.NT1} also implies that
\begin{align}\label{s4.V}
V\leq & (n-1-k)\int_{\Sigma_t}g(\Phi)\left(H_{k}-uH_{k+1}\right)d\mu_t.
\end{align}

Combining the above equations, we have
\begin{align*}
\frac{d}{d t} \int_{\Sigma_t} g(\Phi) H_k d\mu_t
\leq &-\frac{1}{\binom{n-1}{k}}\int_{\Sigma_t}g'(\Phi)\sigma_k|\nabla\Phi|^2d\mu_t\\
&\quad +\frac{1}{\binom{n-1}{k-1}}\int_{\Sigma_t}g''(\Phi)T_{k-2}^{ij}\nabla_i\Phi\nabla_j\Phi\\
&\quad -\frac{1}{\binom{n-1}{k}}\int_{\Sigma_t}g''(\Phi)T_{k-1}^{ij}\nabla_i\Phi\nabla_j\Phi\frac{H_{k-1}}{H_k}d\mu_t\\
\leq &-\int_{\Sigma_t}g''(\Phi)\biggl(\frac{1}{\binom{n-1}{k}}T_{k-1}^{ij}\frac{H_{k-1}}{H_k}-\frac{1}{\binom{n-1}{k-1}}T_{k-2}^{ij}\biggr)\nabla_i\Phi\nabla_j\Phi d\mu_t\\
=&\int_{\Sigma_t}g''(\Phi)\frac{H_{k-1}^2}{H_k}g^{js}\frac{\partial}{\partial h_{i}^s}\left(\frac{H_k}{H_{k-1}}\right)\nabla_i\Phi\nabla_j\Phi d\mu_t\\
\leq &0,
\end{align*}
which follows from the fact that $H_k>0$, $g'\geq 0$, $g''\geq 0$ and that $H_k/H_{k-1}$ is strictly increasing with respect to the Weingarten tensor $h_{i}^j$.  This means that $\int_{\Sigma_t} g(\Phi) \sigma_k d\mu_t$
is strictly decreasing along the flow \eqref{s4.IMCF} unless $\Sigma_t$ is totally umbilic.

On the other hand, by the evolution equation of the quermassintegral $W_\ell(\Omega_t)=\int_{\Sigma_t}\sigma_\ell d\mu_t$ (a special case of the equation \eqref{s2.lemevlsk1}), we have for $\ell=0,1,\cdots,k-1$,
\begin{align*}
\frac{d}{d t} W_\ell(\Omega_t)=&(\ell+1)\int_{\Sigma_t} \sigma_{\ell+1} \left(\frac{H_{k-1}}{H_k}-u\right)d\mu_t\\
=&(\ell+1)\binom{n-1}{\ell+1}\int_{\Sigma_t}\left(\frac{H_{\ell+1}H_{k-1}}{H_k}-uH_{\ell+1}\right)d\mu_t\\
\geq &(\ell+1)\binom{n-1}{\ell+1}\int_{\Sigma_t}\left(H_\ell-uH_{\ell+1}\right)d\mu_t\\
=&\int_{\Sigma_t}\nabla_i\left(T_{\ell}^{ij}\nabla_j\Phi\right)d\mu_t\\
=&0
\end{align*}
along the flow \eqref{s4.IMCF}, where we used \eqref{s4.dTk} and the Newton-MacLaurin inequality \eqref{s2.NT1}. For $\ell=-1$, by the coarea formula and the Heintze-Karcher inequality \cite{Ros87}, we also have
\begin{align*}
\frac{d}{d t} W_{-1}(\Omega_t)=&\int_{\Sigma_t} \left(\frac{H_{k-1}}{H_k}-u\right)d\mu_t\\
\geq &\int_{\Sigma_t}\left(\frac{n-1}{H}-u\right)d\mu_t\\
\geq & 0.
\end{align*}
That is, $W_\ell(\Omega_t)$, $\ell=-1,0,\cdots,k-1$ are all non-decreasing along the flow \eqref{s4.IMCF}.

Since the flow converges to a round sphere $S^{n-1}(r)$ as $t\to\infty$, we have
\begin{align*}
W_\ell(\Omega)\leq &\lim_{t\to\infty}W_\ell(\Omega_t)=\binom{n-1}{\ell}\omega_{n-1}r^{n-1-\ell}=:\xi(r)\\
\int_{\Sigma} g(\Phi) \sigma_k d\mu\geq &\lim_{t\to\infty}\int_{\Sigma_t} g(\Phi) \sigma_k d\mu_t=\binom{n-1}{k}\omega_{n-1}g(\frac{r^2}{2})r^{n-k-1}=:\chi(r).
\end{align*}
Both $\xi(r)$ and $\chi(r)$ are strictly increasing in $r$ and so the inverse $\xi^{-1}$ of $\xi(r)$ is well defined. Combining the above two inequalities, we have
\begin{equation*}
\int_{\Sigma} g(\Phi) \sigma_k d\mu\geq \chi\circ \xi^{-1}\Big(W_\ell(\Omega)\Big).
\end{equation*}
This proves the inequality \eqref{s1.eq-thm}.

If the equality holds, from the proof of the monotonicity, $\Sigma_t$ is totally umbilic for all $t$, implying that $\Sigma$ is a round sphere. Furthermore, when either $g$ is strictly increasing or $g$ is strictly convex, we see that $|\nabla\Phi|$ vanishes identically and so $\Sigma$ is a round sphere centered at the origin.
\end{proof}

\section{Weighted Minkowski type inequalities in space forms}
In this section, we will focus on the Minkowski type inequalities with general convex weight in space forms.

\subsection{$2$-dimensional case}\label{sec.curve}
We first treat the case $n=2$, the boundary of a smooth bounded domain $\Omega\subset M^2(K)$ is a smooth closed curve $\gamma=\partial\Omega$. Assuming $0 \in \Omega$, we denote by $\gamma_t$ the evolution of $\gamma$ under the flow \eqref{flow}, with $\Omega_t$ representing the region bounded by $\gamma_t$.

\begin{lemma}\label{2d-prop-mono}
Suppose $g :[0, \infty)\to \mathbb R$ is $C^2$ such that $g '\ge 0$ and $g ''\ge 0$, then
the quantity $$ \int_{\gamma_t} g (\Phi) \kappa d s+A_g(\Omega_t) $$ is monotone decreasing along the flow \eqref{flow}.
\end{lemma}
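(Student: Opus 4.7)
The plan is to substitute $F = \phi'(r)/\kappa - u$ from \eqref{flow} into the general evolution equation \eqref{s3.evl1}. In two dimensions, $H = \kappa$ and $\sigma_2(\kappa) = 0$, so \eqref{s3.evl1} specializes to
\[
\frac{d}{dt}\left(\int_{\gamma_t} g(\Phi)\kappa\,ds + A_g(\Omega_t)\right) = \int_{\gamma_t}\bigl(2g'(\Phi)u\kappa - g''(\Phi)|\nabla\Phi|^2\bigr)\left(\frac{\phi'}{\kappa} - u\right)ds.
\]
Expanding produces four terms. Two of them, namely $-2g'(\Phi)u^2\kappa$ and $-g''(\Phi)|\nabla\Phi|^2\phi'/\kappa$, are manifestly non-positive under the hypotheses, but the cross terms $2g'(\Phi)u\phi'$ and $+g''(\Phi)|\nabla\Phi|^2 u$ have no obvious sign.

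The key step is to integrate $2\int_{\gamma_t} g'(\Phi)u\phi'\,ds$ by parts using the two-dimensional reductions of \eqref{s2.du} and \eqref{s2.DeltaPhi}: in the arclength parameter $s$, these read $u_s = \kappa \Phi_s$ and $\Phi_{ss} = \phi' - u\kappa$. Substituting $\phi' = \Phi_{ss} + u\kappa$ and integrating by parts on the closed curve yields
\[
2\int_{\gamma_t} g'(\Phi)u\phi'\,ds = -2\int_{\gamma_t} g''(\Phi)u|\nabla\Phi|^2\,ds - 2\int_{\gamma_t} g'(\Phi)\kappa|\nabla\Phi|^2\,ds + 2\int_{\gamma_t} g'(\Phi)u^2\kappa\,ds.
\]
The last integral cancels the $-2g'(\Phi)u^2\kappa$ term from the original expansion, while the two contributions involving $g''(\Phi)u|\nabla\Phi|^2$ combine with net negative coefficient. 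After collecting, the derivative collapses to
\[
\frac{d}{dt}\left(\int_{\gamma_t} g(\Phi)\kappa\,ds + A_g(\Omega_t)\right) = -\int_{\gamma_t}\left(g''(\Phi)u|\nabla\Phi|^2 + 2g'(\Phi)\kappa|\nabla\Phi|^2 + g''(\Phi)|\nabla\Phi|^2\frac{\phi'}{\kappa}\right)ds,
\]
which is $\le 0$: by hypothesis $g',g''\ge 0$, by strict convexity $\kappa>0$, on the relevant range $\phi'>0$ (using in the $K=1$ case that the curve stays in the open hemisphere, as is known for the flow \eqref{flow} from \cite{KWWW22}), and $u=\langle V,\nu\rangle>0$ since $\gamma$ is strictly convex and encloses the origin.

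The main obstacle is arranging the cancellation of the indefinite-sign cross terms. Recognizing the identity $\phi' = \Phi_{ss} + u\kappa$ as the right replacement for $\phi'$ is the crucial observation; once made, integration by parts produces exactly the cancellations needed, and the surviving integrand is manifestly non-negative thanks to the convexity and monotonicity of $g$ together with the geometric assumptions on $\gamma$.
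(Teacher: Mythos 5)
Your proposal is correct and follows essentially the same route as the paper: both substitute $F=\phi'/\kappa-u$ into \eqref{s3.evl1}, use the identities $\Phi_{ss}=\phi'-u\kappa$ and $u_s=\kappa\Phi_s$ (the $n=2$ forms of \eqref{s2.DeltaPhi} and \eqref{s2.du}) to integrate by parts, and arrive at the same non-positive remainder $-\int_{\gamma_t}\bigl(g''u|\nabla\Phi|^2+2g'\kappa|\nabla\Phi|^2+g''|\nabla\Phi|^2\phi'/\kappa\bigr)ds$. The only cosmetic difference is that the paper discards the term $-\int g''|\nabla\Phi|^2\phi'/\kappa\,ds$ as an early inequality while you carry it through to an exact identity.
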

\begin{proof}
By results of \cite{KWWW22}, the flow \eqref{flow} preserves convexity and $0\in \Omega_t$.  
A special case of \eqref{s3.evl1} gives
\begin{equation} \label{s3.evln=2}
\frac{d}{d t} \left(\int_{\gamma_t} g(\Phi) \kappa ds+A_g(\Omega_t) \right)
=\int_{\gamma_t}\biggl(2g'(\Phi) u \kappa-g''(\Phi)|\nabla\Phi|^2\biggr)Fds,
\end{equation}
where $s$ denotes the arc-length of the curves $\gamma_t$. Letting $F=\frac{\phi'}{\kappa}-u$ in \eqref{s3.evln=2} and using
\begin{equation*}
\Delta\left(g(\Phi)\right)=g''(\Phi)|\nabla\Phi|^2+g'(\Phi)\Delta\Phi,
\end{equation*}
we have that along the flow \eqref{flow},
\begin{align*}
\frac{d}{d t}\left[\int_{\gamma_t} g (\Phi)\kappa \, d s + A_g(\Omega_t)\right]
= & \int_{\gamma_t} 2 g '(\Phi) u \left(\phi' - \kappa u\right) \, d s - \int_{\gamma_t} g''(\Phi) |\nabla\Phi|^2 \frac{\phi'}{\kappa} \, d s\\
&\quad + \int_{\gamma_t} g ''(\Phi) |\nabla \Phi|^2 u \, d s \\
\leq & \int_{\gamma_t} 2 g '(\Phi) u(\phi' - \kappa u) \, d s + \int_{\gamma_t} g ''(\Phi) |\nabla \Phi|^2 u \, d s \\
=& \int_{\gamma_t} 2 g '(\Phi) u \Delta \Phi \, d s + \int_{\gamma_t} g ''(\Phi) |\nabla \Phi|^2 u \, d s \\
=& \int_{\gamma_t} g '(\Phi) u \Delta \Phi \, d s + \int_{\gamma_t} \Delta\left(g (\Phi)\right) u \, d s \\
=& \int_{\gamma_t} g '(\Phi) u \Delta \Phi \, d s - \int_{\gamma_t} \langle \nabla\left(g (\Phi)\right), \nabla u \rangle \, d s \\
=& -\int_{\gamma_t} g ''(\Phi) u |\nabla \Phi|^2 \, d s - 2 \int_{\gamma_t} g '(\Phi) \langle \nabla u, \nabla \Phi \rangle \, d s \\
=& -\int_{\gamma_t} g ''(\Phi) u |\nabla \Phi|^2 \, d s - 2 \int_{\gamma_t} g '(\Phi) \kappa |\nabla \Phi |^2 \, d s \\
\leq & 0,
\end{align*}
where we have used $\nabla u=\kappa \nabla \Phi$ in the last equality.
\end{proof}

\begin{proof}[Proof of Theorem \ref{s1.thmn=2}]
By Lemma \ref{2d-prop-mono}, we only have to compute the limit of $\int_{\gamma_t} g(\Phi) \kappa d s+A_g(\Omega_t)$. Denote the length of the initial curve by $L$.
By results of \cite{KWWW22}, the flow \eqref{flow} is length preserving and $\gamma_t$ converges to a circle $\gamma_\infty=\partial \Omega_\infty$ centered at the origin of radius $\rho_\infty=\phi^{-1}\left(\frac{L}{2\pi}\right)$, with  $\kappa=\frac{\sqrt{4\pi^2-KL^2}}{L}$, and
\begin{equation*}
\begin{aligned}
A_g(\Omega_\infty)=
2\pi \int_0^{\rho_{\infty}}\left[g^{\prime}(\Phi(r)) \phi(r) \phi^{\prime}(r) d r+K g (\Phi(r)) \phi(r)\right] d r.
\end{aligned}
\end{equation*}
We compute
\begin{equation*}
\begin{aligned}
\int_0^{\rho_\infty} g^{\prime}(\Phi) \phi \phi^{\prime} d r
=& \int_0^{\rho_\infty}(g (\Phi(r)))^{\prime} \phi^{\prime} d r\\
=& -\int_0^{\rho_\infty} g ({\Phi}) \phi^{\prime \prime}dr+g (\Phi(\rho_\infty)) \phi^{\prime}(\rho_\infty)-g (0) \\
=& \int_0^{\rho_\infty}K g (\Phi) \phi dr+g \left(\Phi\left(\rho_{\infty}\right)\right) \phi^{\prime}\left(\rho_{\infty}\right)-g (0).
\end{aligned}
\end{equation*}
Let $G(r):=\int_0^r g(t) d t$.
By a direct calculation, $\phi^{\prime}\left(\rho_{\infty}\right)= \phi^{\prime}\left(\phi^{-1}\left(\frac{L}{2\pi}\right)\right)= \sqrt{1-\frac{KL^2}{4\pi^2}}$.
So
\begin{align*}
A_g \left[\Omega_{\infty}\right]
=& 4\pi\int_0^{\rho_{\infty}} K g (\Phi) \phi d r+ g \left(\Phi\left(\rho_{\infty}\right)\right) \sqrt{4\pi^2-KL^2}-2\pi g (0) \\
=& 4\pi K G(\Phi(\rho_\infty)) +g \left(\Phi\left(\rho_{\infty}\right)\right) \sqrt{4 \pi^2-K L^2}-2 \pi g (0).
\end{align*}
From this,
\begin{equation*}
\int_{\gamma_{\infty}} g (\Phi) \kappa d s+A_g(\Omega_\infty)
=2g \left(\Phi\left(\rho_{\infty}\right)\right) \sqrt{4 \pi^2-K L^2}-2 \pi g (0) +4 \pi K G\left(\Phi\left(\rho_{\infty}\right)\right).
\end{equation*}

Define $\textrm{A}(l)$ to be the area of the geodesic ball in $M^2(K)$ whose circumference is $l$, then
$\textrm{A}(l)=2\pi \Phi\circ \phi^{-1}\left(\frac{l}{2\pi}\right)$.
Therefore, we get the inequality
\begin{equation*}
\begin{split}
\int_\gamma g(\Phi) \kappa d s+A_g(\Omega) \ge& \int_{\gamma_\infty} g(\Phi) \kappa d s+A_g(\Omega_\infty)\\
=&2 g\left(\frac{\mathrm{A}(L)}{2 \pi}\right) \sqrt{4 \pi^2-K L^2}-2 \pi g(0)+4 \pi K G\left(\frac{\mathrm{A}(L)}{2 \pi}\right).
\end{split}
\end{equation*}
If $g$ is either strictly increasing or strictly convex, and the equality holds, then from the proof of Lemma \ref{2d-prop-mono}, we conclude that $\Phi$ is constant, and thus $\gamma$ is a geodesic circle centered at the origin.
\end{proof}

\subsection{The hyperbolic space as the ambient space}

In this section, we prove the weighted Minkowski inequality for $h$-convex hypersurfaces in the hyperbolic space $\mathbb{H}^n$. The case $n=2$ has been treated in Section \ref{sec.curve}. We only focus on $n\geq 3$ in this section.

By \eqref{s3.evl1}, along the flow \eqref{s4.Flowk=1}, the left hand side of \eqref{s1.eq-thmH} evolves by
\begin{align}\label{s4.evl-gH1}
& \frac{d}{d t} \left(\int_{\Sigma_t} g(\Phi) H d \mu_t+(n-1)A_g(\Omega_t)\right)\nonumber\\
= &2\int_{\Sigma_t}g(\Phi) \sigma_{2}\left(\frac{\phi'}{H}-\frac{u}{n-1}\right)d \mu_t+2\int_{\Sigma_t} g'u\left(\phi'-\frac{uH}{n-1}\right)d \mu_t\nonumber\\
&-\int_{\Sigma_t} g''|\nabla\Phi|^2\left(\frac{\phi'}{H}-\frac{u}{n-1}\right)d \mu_t\nonumber\\
=&I+II+III.
\end{align}
We estimate the three terms separately. Since $g\geq 0$, by the Newton-MacLaurin inequality \eqref{s2.NT2}, the first term $I$ satisfies
\begin{align*}
I\leq & \frac{1}{n-1}\int_{\Sigma_t}g(\Phi)\left((n-2)\phi'H-2u\sigma_2\right)d \mu_t.
\end{align*}
Using \eqref{s2.du} and \eqref{s2.DeltaPhi}, we can estimate the second term $II$ as following
\begin{align*}
\frac{1}{2}II=&\frac{1}{n-1}\int_{\Sigma_t}g'u\left((n-1)\phi'-uH\right)\\
=&\frac{1}{n-1}\int_{\Sigma_t}g'u\Delta\Phi\\
=&-\frac{1}{n-1}\int_{\Sigma_t}\langle \nabla_i\Phi, g''u\nabla_i\Phi+g'h_i^j\nabla_j\Phi\rangle \\
=&-\frac{1}{n-1}\int_{\Sigma_t}ug''|\nabla\Phi|^2-\frac{1}{n-1}\int_{\Sigma_t}g'h_s^ig^{sj}\nabla_j\Phi\nabla_i\Phi.
\end{align*}
Using $T_1^{ij}=\frac{\partial\sigma_2}{\partial h_{ij}}=\sigma_1g^{ij}-h_{s}^ig^{sj}$, we can rewrite the second term and obtain
\begin{align*}
\frac{1}{2}II=&-\frac{1}{n-1}\int_{\Sigma_t}ug''|\nabla\Phi|^2-\frac{1}{n-1}\int_{\Sigma_t}g'\left(\sigma_1g^{ij}-T_1^{ij}\right)\nabla_i\Phi\nabla_j\Phi\\
=&-\frac{1}{n-1}\int_{\Sigma_t}ug''|\nabla\Phi|^2-\frac{1}{n-1}\int_{\Sigma_t}g'\sigma_1|\nabla\Phi|^2+\frac{1}{n-1}\int_{\Sigma_t}g'T_1^{ij}\nabla_i\Phi\nabla_j\Phi.
\end{align*}
As $T_1$ is divergence free,
\begin{align*}
\frac{1}{n-1}\int_{\Sigma_t}g'T_1^{ij}\nabla_i\Phi\nabla_j\Phi=& \frac{1}{n-1}\int_{\Sigma_t}T_{1}^{ij}\nabla_ig\nabla_j\Phi\\
=& -\frac{1}{n-1}\int_{\Sigma_t}g\nabla_i\left(T_1^{ij}\nabla_j\Phi\right)\\
=& -\frac{1}{n-1}\int_{\Sigma_t}g\left((n-2)\phi'H-2u\sigma_2\right).
\end{align*}
Combining the above equations, we arrive at
\begin{align*}
\frac{d}{d t} \left(\int_{\Sigma_t} g(\Phi) H d \mu_t+(n-1)A_g(\Omega_t)\right)= &I+\frac{1}{2}II+\frac{1}{2}II+III\\
{\le}&-\frac{1}{n-1}\int_{\Sigma_t}ug''|\nabla\Phi|^2-{\frac{1}{n-1}\int_{\Sigma_t}g'h_s^ig^{sj}\nabla_j\Phi\nabla_i\Phi}\nonumber\\
&-\frac{1}{n-1}\int_{\Sigma_t}g'\sigma_1|\nabla\Phi|^2-\frac{1}{n-1}\int_\Sigma g''|\nabla\Phi|^2\frac{\phi'}{H}\\
\leq &0,
\end{align*}
which follows from the fact that $g'\geq 0, g''\geq 0$ and $\Sigma_t$ is $h$-convex.

On the other hand, the area of $\Sigma_t$ is fixed along the flow \eqref{s4.Flowk=1}. Applying the convergence result of the flow \eqref{s4.Flowk=1}, we obtain the inequality \eqref{s1.eq-thmH}. The equality case can be treated similary as in Theorem \ref{s1.thmRn}.

\subsection{The sphere as the ambient space}
In this section, we prove the weighted Minkowski inequality in the sphere. By the general evolution equation \eqref{s3.evl1}, along the flow \eqref{s5.Flowk=1}, we have
\begin{align*}
& \frac{d}{d t} \left(\int_{\Sigma_t} g(\Phi) H d \mu_t+(n-1)A_g(\Omega_t)\right)\\
= &2\int_{\Sigma_t}g(\Phi) \sigma_{2}\left(\frac{n-1}{H}-\frac{u}{\phi'}\right)d \mu_t+2\int_{\Sigma_t} g'(\Phi)uH\left(\frac{n-1}{H}-\frac{u}{\phi'}\right)d \mu_t\\
&-\int_{\Sigma_t} g''(\Phi)|\nabla\Phi|^2\left(\frac{n-1}{H}-\frac{u}{\phi'}\right)d \mu_t\\
=&I+II+III.
\end{align*}
We estimate the three terms separately. By the Newton-MacLaurin inequality \eqref{s2.NT2} and \eqref{s2.dT1}, the first term satisfies
\begin{align*}
I\leq & \int_{\Sigma_t}\frac{g(\Phi)}{\phi'}\left((n-2)\phi'\sigma_1-2u\sigma_2\right)d \mu_t
=~\int_{\Sigma_t}\frac{g(\Phi)}{\phi'}\nabla_i\left(T_1^{ij}\nabla_j\Phi\right).
\end{align*}
By \eqref{s2.du}, \eqref{s2.DeltaPhi} and integration by parts,
\begin{align*}
\frac{1}{2}II=&\int_{\Sigma_t}\frac{g'u}{\phi'}\left((n-1)\phi'-uH\right)\\
=&\int_{\Sigma_t}\frac{g'u}{\phi'}\Delta\Phi\\
=&-\int_{\Sigma_t}\langle \nabla_i\Phi, \frac{g'}{\phi'}h^j_i\nabla_j\Phi-\frac{ug'}{\phi'^2}\nabla_i\phi'+\frac{ug''}{\phi'}\nabla_i\Phi\rangle \\
=&-\int_{\Sigma_t}\left(\sum_{i=1}^m\frac{g'\kappa_i}{\phi'}|\nabla_i\Phi|^2+\frac{ug'}{\phi'^2}|\nabla\Phi|^2+\frac{ug''}{\phi'}|\nabla \Phi|^2\right),
\end{align*}
where we used $\phi'+\Phi=1$ in the sphere and so $\nabla_i\phi'=-\nabla_i\Phi$.

Using $T_1^{ij}=\sigma_1g^{ij}-h_{s}^ig^{sj}$ and the fact $T_1^{ij}$ is divergence free, we can rewrite the first term and obtain
\begin{align*}
\frac{1}{2}II=&-\int_{\Sigma_t}\left(\frac{g'}{\phi'}\left(\sigma_1g^{ij}-T_1^{ij}\right)\nabla_i\Phi\nabla_j\Phi+\frac{ug'}{\phi'^2}|\nabla\Phi|^2+\frac{ug''}{\phi'}|\nabla \Phi|^2\right)\\
=&-\int_{\Sigma_t}\frac{g'}{\phi'}\sigma_1|\nabla\Phi|^2+\int_{\Sigma_t}\frac{1}{\phi'}T_1^{ij}\nabla_ig\nabla_j\Phi+\int_{\Sigma_t}\left(-\frac{ug'}{\phi'^2}|\nabla\Phi|^2-\frac{ug''}{\phi'}|\nabla \Phi|^2\right)\\
=&-\int_{\Sigma_t}\frac{g'}{\phi'}\sigma_1|\nabla\Phi|^2-\int_{\Sigma_t}\frac{g}{\phi'}\nabla_i\left(T_1^{ij}\nabla_j\Phi\right)+\int_{\Sigma_t}\frac{g}{\phi'^2}T_1^{ij}{\nabla_i\phi'}\nabla_j\Phi\\
&+\int_{\Sigma_t}\left(-\frac{ug'}{\phi'^2}|\nabla\Phi|^2-\frac{ug''}{\phi'}|\nabla \Phi|^2\right)
\end{align*}
Combining the above equations, we arrive at
\begin{align*}
&\frac{d}{d t} \left(\int_{\Sigma_t} g(\Phi) H d \mu_t+(n-1)A_g(\Omega_t)\right)\\
= &I+\frac{1}{2}II+\frac{1}{2}II+III\\
\leq & -\int_{\Sigma_t}\left(\sum_{i=1}^m\frac{g'\kappa_i}{\phi'}|\nabla_i\Phi|^2+\frac{ug'}{\phi'^2}|\nabla\Phi|^2+\frac{ug''}{\phi'}|\nabla \Phi|^2\right)\\
&-\int_{\Sigma_t}\frac{g'}{\phi'}\sigma_1|\nabla\Phi|^2-\int_{\Sigma_t}\frac{g}{\phi'^2}T_1^{ij}\nabla_i\Phi\nabla_j\Phi\\
&-\int_{\Sigma_t}\frac{ug'}{\phi'^2}|\nabla\Phi|^2
-\int_{\Sigma_t} g''|\nabla\Phi|^2\frac{n-1}{H} \\
\leq & 0.
\end{align*}

On the other hand, the coarea formula implies that
\begin{align*}
\frac{d}{dt}\int_{\Omega_t}\phi'dv=&\int_{\Sigma_t} \phi'\left(\frac{n-1}{H}-\frac{u}{\phi'}\right)d\mu_t\\
=&\int_{\Sigma_t} \left(\frac{(n-1)\phi'}{H}-u\right)d\mu_t\\
\geq &0,
\end{align*}
where the inequality is due to the Heitze-Karcher inequality in $\mathbb{S}^n$ proved by Brendle \cite{Bre13}. Then Theorem \ref{s1.thmSn} follows from the monotonicity and convergence of the flow \eqref{s5.Flowk=1}.

\begin{remark}
By Gir\~{a}o and Pinheiro \cite{GP17},
\begin{equation*}
\int_{\Omega}\phi'dv\leq \frac{1}{n}\omega_{n-1}\left(\frac{|\Sigma|}{\omega_{n-1}}\right)^{\frac{n}{n-1}}
\end{equation*}
for strictly convex domain in $\mathbb{S}^n$. It would be interesting to prove  \eqref{s1.eq-thmSn} with $\int_{\Omega}\phi'dv$ replaced by $|\Sigma|$.
\end{remark}

\section{Applications of weighted inequalities to eigenvalue estimates}

In this section, we demonstrate a series of eigenvalue estimates derived by applying Theorem \ref{s1.thmn=2} and its generalization as established by the authors in \cite{KW23}. Specifically, we provide a sharp upper bound for the first positive eigenvalue of an operator involving the curvature $H_k$.

Let us now describe the problem. Let $\Sigma$ be a smooth, closed, $k$-convex hypersurface in $\mathbb{R}^n$. Consider the eigenvalue problem associated with the positive second-order differential operator $-\frac{1}{H_k} \Delta$ on $\Sigma$:
$$-\Delta f = \lambda H_k f,$$
where $H_k$ is the normalized $k$th mean curvature defined in \eqref{s2.Hk}. When $k=0$, this is just the eigenvalue problem for the classical Laplace-Beltrami operator on $\Sigma$. By classical spectral theory, the eigenvalues satisfy $0 = \lambda_0 < \lambda_1 < \lambda_2 < \cdots \to \infty$, where multiplicities are ignored. Our objective is to establish a sharp upper bound for $\lambda_1$ using weighted inequalities developed in our work. This problem can be recast in a variational framework: $\lambda_1$ is characterized as the minimum of the Dirichlet energy $\int_\Sigma |\nabla f|^2 d\mu$, subject to the constraint $\int_\Sigma H_k f d\mu = 0$ and the normalization $\int_{\Sigma} H_k f^2 d \mu=1$. Explicitly, we have
\begin{equation}\label{min}
\lambda_1 = \min \left\{ \frac{\int_\Sigma |\nabla f|^2 d\mu}{\int_\Sigma H_k f^2 d\mu} : \int_\Sigma H_k f d\mu = 0 \right\}.
\end{equation}

To tackle this eigenvalue problem, we turn our attention to Theorem \ref{s1.thmn=2}. This theorem vastly generalizes the two-dimensional result previously established with G. Wheeler and  V.-M.Wheeler \cite[Theorem 1.2]{KWWW22}. In the $\mathbb{R}^2$ setting, the result states that for a smooth convex domain $\Omega \subset \mathbb{R}^2$,
\begin{equation*}\label{2d 3term}
\int_{\partial \Omega} \kappa \frac{r^2}{2} ds \ge \frac{1}{2\pi} \left(|\partial \Omega|^2 - 2\pi |\Omega| \right),
\end{equation*}
which follows as a special case of Theorem \ref{s1.thmn=2} by choosing $g(x)=x$.
This inequality extends to higher dimensions by the authors \cite[Theorem 1.3]{KW23} in the following form:
\begin{equation}\label{ineq 3term}
\int_\Sigma H_k \frac{r^2}{2} d\mu + \frac{k}{n - k + 1} \int_\Sigma H_{k-2} d\mu \ge \frac{n+1+k}{2(n+1-k)} \omega_{n-1}^{-\frac{1}{n-k}} \left(\int_\Sigma H_{k-1} d\mu \right)^{\frac{n+1-k}{n-k}},
\end{equation}
where $\Sigma$ is a $k$-convex hypersurface in $\mathbb{R}^n$. Equality holds if and only if $\Sigma$ is a coordinate sphere. 

In our earlier work \cite{KW23}, a weighted inequality different from \eqref{ineq 3term} was employed to derive a sharp estimate for the first non-zero Steklov eigenvalue for star-shaped mean-convex domains in $\mathbb{R}^n$, generalizing the classical Weinstock inequality. Here, we demonstrate how inequality \eqref{ineq 3term} can be utilized to obtain a sharp upper bound for the first non-zero eigenvalue $\lambda_1$ of the operator $-\frac{\Delta}{H_k}$ on $k$-convex hypersurfaces in $\mathbb{R}^n$.

\begin{theorem}\label{eigen}
Let $\Sigma$ be a smooth, closed, star-shaped, and $k$-convex hypersurface in $\mathbb{R}^n$, where $n \geq 2$ and $1 \leq$ $k \leq n-1$. Then the first non-zero eigenvalue $\lambda_1$ of the operator $-\frac{1}{H_k} \Delta$ on $\Sigma$ satisfies
$$\lambda_1 \le \frac{1}{2} (n-1) |\Sigma| \left[\frac{n+1+k}{2(n+1-k)} \omega_{n-1}^{-\frac{1}{n-k}} \left(\int_\Sigma H_{k-1} d\mu \right)^{\frac{n+1-k}{n-k}} - \frac{k}{n-k+1} \int_\Sigma H_{k-2} d\mu \right]^{-1}. $$
Equality holds if and only if $\Sigma$ is a sphere.
\end{theorem}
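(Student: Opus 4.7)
The plan is to plug the Euclidean coordinate functions, suitably translated, into the variational characterization \eqref{min} of $\lambda_1$, and then invoke the weighted inequality \eqref{ineq 3term} to bound the resulting radial integral from below.

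First I would translate $\Sigma$ so that the $H_k$-weighted centroid sits at the origin: with $\bar{x}_i := \bigl(\int_\Sigma H_k\,d\mu\bigr)^{-1}\int_\Sigma H_k x_i\,d\mu$, replace $\Sigma$ by its translate $\Sigma - \bar{x}$. Since $k$-convexity is translation invariant and since \eqref{ineq 3term} holds for any $k$-convex hypersurface with $r=|X|$ measured from whichever origin is currently in use, all hypotheses needed downstream transfer to the shifted surface. After the shift, each ambient coordinate function $x_i$, restricted to $\Sigma$, satisfies $\int_\Sigma H_k x_i\,d\mu = 0$ and is therefore admissible in \eqref{min}.

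Next I would insert $f = x_i$ into the Rayleigh quotient to obtain $\lambda_1 \int_\Sigma H_k x_i^2\,d\mu \le \int_\Sigma |\nabla^\Sigma x_i|^2\,d\mu$ for each $i = 1,\dots, n$, and sum. Because $\nabla^\Sigma x_i = e_i - \langle e_i,\nu\rangle \nu$, we have $\sum_i |\nabla^\Sigma x_i|^2 = n - |\nu|^2 = n-1$, while $\sum_i x_i^2 = r^2$. Summation therefore gives
\[
\lambda_1 \int_\Sigma H_k\, r^2\,d\mu \le (n-1)|\Sigma|,
\]
equivalently $\lambda_1 \le (n-1)|\Sigma| \big/ \bigl(2\int_\Sigma H_k \tfrac{r^2}{2}\,d\mu\bigr)$.

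Finally I would apply \eqref{ineq 3term} to lower bound the denominator $\int_\Sigma H_k \tfrac{r^2}{2}\,d\mu$. Direct substitution then reproduces exactly the stated upper bound on $\lambda_1$. For the equality case, equality in the Rayleigh step forces each $x_i$ to be a $\lambda_1$-eigenfunction of $-\tfrac{1}{H_k}\Delta$, while equality in \eqref{ineq 3term} forces $\Sigma$ to be a coordinate sphere; conversely, a direct check on a round sphere shows both conditions are realized. I do not anticipate a substantive obstacle: the heavy lifting is already in \eqref{ineq 3term}, and what remains is a routine test-function computation combined with the observation $\sum_i |\nabla^\Sigma x_i|^2 = n-1$.
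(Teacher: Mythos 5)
Your proposal is correct and follows essentially the same route as the paper: translate so the $H_k$-weighted centroid is at the origin, test the Rayleigh quotient with the coordinate functions $x_i$, use $\sum_i |\nabla^\Sigma x_i|^2 = n-1$, and then apply the three-term inequality \eqref{ineq 3term} to bound $\int_\Sigma H_k \tfrac{r^2}{2}\,d\mu$ from below. The only minor difference is that you spell out the equality analysis slightly more explicitly than the paper, which simply appeals to the rigidity case of \eqref{ineq 3term}.
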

\begin{proof}

Since $ H_k >0$, we can assume, after a suitable translation, that the weighted center of mass satisfies $\int_\Sigma H_k x d\mu = 0$, where $x = (x_1, \dots, x_n)$ is the position vector. Consequently, by \eqref{min}, for each $i = 1, \dots, n$,
$$\int_\Sigma |\nabla x_i|^2 d\mu \ge \lambda_1 \int_\Sigma H_k x_i^2 d\mu. $$
Summing over $i = 1, \dots, n$ and applying inequality \eqref{ineq 3term}, we have
\begin{equation*}
\begin{aligned}
(n-1)|\Sigma| & =\int_{\Sigma} \sum_{i=1}^n\left|\nabla x_i\right|^2 d \mu \\
& \ge\lambda_1 \int_{\Sigma} H_k r^2 d \mu \\
& \ge 2 \lambda_1\left[\frac{n+1+k}{2(n+1-k)} \omega_{n-1}^{-\frac{1}{n-k}}\left(\int_{\Sigma} H_{k-1}d\mu\right)^{\frac{n+1-k}{n-k}}-\frac{k}{n-k+1} \int_{\Sigma} H_{k-2}d\mu\right].
\end{aligned}
\end{equation*}
Rearranging yields the desired result. The case of equality follows directly from the equality condition of \eqref{ineq 3term}.
\end{proof}
In the two-dimensional case with $k=1$, this result assumes a particularly elegant geometric form. Specifically, for a smooth, closed, convex curve $\gamma$ in $\mathbb{R}^2$, the first non-zero eigenvalue $\lambda_1$ of the operator $-\frac{1}{\kappa} \Delta$ satisfies
$$
\lambda_1 \leq \frac{\pi L}{L^2-2 \pi A},
$$
where $L$ denotes the length of $\gamma$ and $A$ represents the area enclosed by $\gamma$. Equality holds if and only if $\gamma$ is a circle.

\begin{bibdiv}
\begin{biblist}
\bibliographystyle{amsplain}
\bib{AFM22}{article}{
author={Agostiniani, Virginia},
author={Fogagnolo, Mattia},
author={Mazzieri, Lorenzo},
title={Minkowski inequalities via nonlinear potential theory},
journal={Arch. Ration. Mech. Anal.},
volume={244},
date={2022},
number={1},
pages={51--85},
issn={0003-9527},
}

\bib{And94}{article}{
author={Andrews, Ben},
title={Contraction of convex hypersurfaces in Riemannian spaces},
journal={J. Differential Geom.},
volume={39},
date={1994},
number={2},
pages={407--431},
issn={0022-040X},
}

\bib{betta2008weighted}{article}{
author={Betta, Maria Francesca},
author={Brock, Friedemann},
author={Mercaldo, Anna},
author={Posteraro, Maria Rosaria},
title={Weighted isoperimetric inequalities on $\mathbb{R}^n$ and applications to rearrangements},
journal={Math. Nachr.},
volume={281},
date={2008},
number={4},
pages={466--498},
}

\bib{borell1975brunn}{article}{
author={Borell, Christer},
title={The Brunn-Minkowski inequality in Gauss space},
journal={Invent. Math.},
volume={30},
date={1975},
number={2},
pages={207--216},
}

\bib{Bre13}{article}{
author={Brendle, Simon},
title={Constant mean curvature surfaces in warped product manifolds},
journal={Publ. Math. Inst. Hautes \'{E}tudes Sci.},
volume={117},
date={2013},
pages={247--269},
issn={0073-8301},
}

\bib{BGL18}{article}{
author={Brendle, Simon},
author={Guan, Pengfei},
author={Li, Junfang},
title={An inverse curvature type hypersurface flow in space forms},
year={2018},
journal={preprint},
}

\bib{BHW16}{article}{
author={Brendle, Simon},
author={Hung, Pei-Ken},
author={Wang, Mu-Tao},
title={A Minkowski inequality for hypersurfaces in the anti--de
Sitter--Schwarzschild manifold},
journal={Comm. Pure Appl. Math.},
volume={69},
date={2016},
number={1},
pages={124--144},
issn={0010-3640},
}

\bib{cabre2013Sobolev}{article}{
author={Cabr{\'e}, Xavier},
author={Ros-Oton, Xavier},
title={Sobolev and isoperimetric inequalities with monomial weights},
journal={J. Differential Equations},
volume={255},
date={2013},
number={11},
pages={4312--4336},
}

\bib{caffarelli1984first}{article}{
author={Caffarelli, Luis},
author={Kohn, Robert},
author={Nirenberg, Louis},
title={First order interpolation inequalities with weights},
journal={Compositio Math.},
volume={53},
date={1984},
number={3},
pages={259--275},
}

\bib{CW13}{article}{
author={Chang, Sun-Yung Alice},
author={Wang, Yi},
title={Inequalities for quermassintegrals on $k$-convex domains},
journal={Adv. Math.},
volume={248},
date={2013},
pages={335--377},
}

\bib{DG16}{article}{
author={de Lima, Levi Lopes},
author={Gir\~ao, Frederico},
title={An Alexandrov-Fenchel-type inequality in hyperbolic space with an
application to a Penrose inequality},
journal={Ann. Henri Poincar\'e},
volume={17},
date={2016},
number={4},
pages={979--1002},
issn={1424-0637},
}

\bib{Ge90}{article}{
author={Gerhardt, Claus},
title={Flow of nonconvex hypersurfaces into spheres},
journal={J. Differential Geom.},
volume={32},
date={1990},
number={1},
pages={299--314},
issn={0022-040X},
}

\bib{GP17}{article}{
author={Gir\~{a}o, Frederico},
author={Pinheiro, Neilha M.},
title={An Alexandrov-Fenchel-type inequality for hypersurfaces in the
sphere},
journal={Ann. Global Anal. Geom.},
volume={52},
date={2017},
number={4},
pages={413--424},
}

\bib{GR20}{article}{
author={Gir\~{a}o, Frederico},
author={Rodrigues, Diego},
title={Weighted geometric inequalities for hypersurfaces in sub-static
manifolds},
journal={Bull. Lond. Math. Soc.},
volume={52},
date={2020},
number={1},
pages={121--136},
issn={0024-6093},
}

\bib{Gu14}{article}{
author={Guan, Pengfei},
title={Curvature measures, isoperimetric type inequalities and fully
nonlinear PDEs},
conference={
title={Fully nonlinear PDEs in real and complex geometry and optics},
},
book={
series={Lecture Notes in Math.},
volume={2087},
publisher={Springer, Cham},
},
isbn={978-3-319-00941-4},
isbn={978-3-319-00942-1},
date={2014},
pages={47--94},
}

\bib{GL09}{article}{
author={Guan, Pengfei},
author={Li, Junfang},
title={The quermassintegral inequalities for $k$-convex starshaped
domains},
journal={Adv. Math.},
volume={221},
date={2009},
number={5},
pages={1725--1732},
issn={0001-8708},
}

\bib{GL15}{article}{
author={Guan, Pengfei},
author={Li, Junfang},
title={A mean curvature type flow in space forms},
journal={Int. Math. Res. Not. IMRN},
date={2015},
number={13},
pages={4716--4740},
issn={1073-7928},
}

\bib{GL21}{article}{
author={Guan, Pengfei},
author={Li, Junfang},
title={Isoperimetric type inequalities and hypersurface flows},
journal={J. Math. Study},
volume={54},
date={2021},
number={1},
pages={56--80},
issn={2096-9856},
}

\bib{HLW22}{article}{
author={Hu, Yingxiang},
author={Li, Haizhong},
author={Wei, Yong},
title={Locally constrained curvature flows and geometric inequalities in
hyperbolic space},
journal={Math. Ann.},
volume={382},
date={2022},
number={3-4},
pages={1425--1474},
issn={0025-5831},
}

\bib{HS99}{article}{
author={Huisken, Gerhard},
author={Sinestrari, Carlo},
title={Convexity estimates for mean curvature flow and singularities of
mean convex surfaces},
journal={Acta Math.},
volume={183},
date={1999},
number={1},
pages={45--70},
issn={0001-5962},
}

\bib{KM14}{article}{
author={Kwong, Kwok-Kun},
author={Miao, Pengzi},
title={A new monotone quantity along the inverse mean curvature flow in
$\Bbb{R}^n$},
journal={Pacific J. Math.},
volume={267},
date={2014},
number={2},
pages={417--422},
issn={0030-8730},
}

\bib{KM15}{article}{
author={Kwong, Kwok-Kun},
author={Miao, Pengzi},
title={Monotone quantities involving a weighted $\sigma_k$ integral along
inverse curvature flows},
journal={Commun. Contemp. Math.},
volume={17},
date={2015},
number={5},
pages={1550014, 10},
issn={0219-1997},
}

\bib{KW23}{article}{
author={Kwong, Kwok-Kun},
author={Wei, Yong},
title={Geometric inequalities involving three quantities in warped
product manifolds},
journal={Adv. Math.},
volume={430},
date={2023},
pages={Paper No. 109213, 28},
issn={0001-8708},
}

\bib{KWWW22}{article}{
author={Kwong, Kwok-Kun},
author={Wei, Yong},
author={Wheeler, Glen},
author={Wheeler, Valentina-Mira},
title={On an inverse curvature flow in two-dimensional space forms},
journal={Math. Ann.},
volume={384},
date={2022},
number={1-2},
pages={285--308},
issn={0025-5831},
}

\bib{Mc05}{article}{
author={McCoy, James A.},
title={Mixed volume preserving curvature flows},
journal={Calc. Var. Partial Differential Equations},
volume={24},
date={2005},
number={2},
pages={131--154},
issn={0944-2669},
}

\bib{Q15}{article}{
author={Qiu, Guohuan},
title={A family of higher-order isoperimetric inequalities},
journal={Commun. Contemp. Math.},
volume={17},
date={2015},
number={3},
pages={1450015, 20},
issn={0219-1997},
}

\bib{Re73}{article}{
author={Reilly, Robert C.},
title={Variational properties of functions of the mean curvatures for
hypersurfaces in space forms},
journal={J. Differential Geometry},
volume={8},
date={1973},
pages={465--477},
issn={0022-040X},
}

\bib{Ros87}{article}{
author={Ros, Antonio},
title={Compact hypersurfaces with constant higher-order mean curvatures},
journal={Rev. Math. Iber.},
volume={3},
year={1987},
pages={447--453},
}

\bib{SX19}{article}{
author={Scheuer, Julian},
author={Xia, Chao},
title={Locally constrained inverse curvature flows},
journal={Trans. Amer. Math. Soc.},
volume={372},
date={2019},
number={10},
pages={6771--6803},
}

\bib{Sch14}{book}{
author={Schneider, Rolf},
title={Convex bodies: the Brunn-Minkowski theory},
series={Encyclopedia of Mathematics and its Applications},
volume={151},
edition={expanded edition},
publisher={Cambridge University Press, Cambridge},
date={2014},
pages={xxii+736},
isbn={978-1-107-60101-7},
}

\bib{Ur90}{article}{
author={Urbas, John I. E.},
title={On the expansion of starshaped hypersurfaces by symmetric
functions of their principal curvatures},
journal={Math. Z.},
volume={205},
date={1990},
number={3},
pages={355--372},
issn={0025-5874},
}

\bib{WZ23}{article}{
author={Wei, Yong},
author={Zhou, Tailong},
title={New weighted geometric inequalities for hypersurfaces in space
forms},
journal={Bull. Lond. Math. Soc.},
volume={55},
date={2023},
number={1},
pages={263--281},
}

\bib{WU24}{article}{
author={Wu, Jie},
title={Weighted Alexandrov-Fenchel type inequalities for hypersurfaces in
$\Bbb R^n$},
journal={Bull. Lond. Math. Soc.},
volume={56},
date={2024},
number={8},
pages={2634--2646},
}

\end{biblist}
\end{bibdiv}
\end{document}